\pgfplotsset{compat=1.10}
\renewcommand{\PrintDOI}[1]{\href{http://dx.doi.org/\detokenize{#1}}{doi: \detokenize{#1}}%
	\IfEmptyBibField{pages}{, (to appear in print)}{}}
\def\commutatif{\ar@{}[rd]|{\circlearrowleft}}
\newtheorem{thm}{Theorem}[section]
\newtheorem{prop}[thm]{Proposition}
\newtheorem{cor}[thm]{Corollary}
\newtheorem{conj}[thm]{Conjecture}
\theoremstyle{definition}
\newtheorem{defn}[thm]{Definition}
\newtheorem*{qst}{Question}
\theoremstyle{remark}
\newtheorem{rmk}[thm]{Remark}
\newtheorem{ex}[thm]{Example}
\newcommand\Hom{\operatorname{Hom}}
\def\End{\operatorname{End}}
\title{The universe inside Hall algebras of coherent sheaves on  toric resolutions}
\begin{document}
		\begin{center}
		\maketitle 	
		\large
		Boris Tsvelikhovskiy \\

		Department of Mathematics,\\
		University of Pittsburgh, 15260, Pittsburgh,~USA\\
		E-mail: bdt18@pitt.edu\hspace{3in}\\
	\end{center}
	
	\textit{Mathematics Subject Classification (2020)}: 14F08, 14J17, 17B35

	\begin{abstract} Let $\mathfrak{g}\neq \mathfrak{so}_8$ be  a simple Lie algebra of type $A,D,E$  with $\widehat{\mathfrak{g}}$ the corresponding  affine Kac-Moody algebra and $\mathfrak{n}_-\subset \widehat{\mathfrak{g}}$  a nilpotent subalgebra. Given $\mathfrak{n}_-$ as above, we provide an infinite collection of cyclic finite abelian subgroups of $SL_3(\mathbb{C})$ with the following properties. Let $G$ be any group in the collection, $Y=G\operatorname{-}\mbox{Hilb}(\mathbb{C}^3)$ and $\Psi: D^b_G(Coh(\mathbb{C}^3))\rightarrow D^b(Coh(Y))$ the derived equivalence of Bridgeland, King and Reid. We present an (explicitly described) subset of objects in $Coh_G(\mathbb{C}^3)$, s.t. the Hall algebra generated by their images under $\Psi$ is isomorphic to $U(\mathfrak{n}_-)$. In case the field $\Bbbk$ (in place of $\mathbb{C}$) is finite and $\mbox{char}(\Bbbk)$ is coprime with the order of $G$, we conjecture the isomorphisms of the corresponding 'counting' Ringel-Hall algebras and the specializations of quantized universal enveloping algebras $U_v(\mathfrak{n}_-)$ at $v=\sqrt{|\Bbbk|}$.    
	\end{abstract}
\section{Introduction}
We begin with describing different versions of McKay correspondence. Let $G\subset GL_n(\mathbb{C})$ be a finite subgroup and consider the categorical quotient $X=\mathbb{C}^n/G:=Spec(\mathbb{C}[x_1,x_2,\hdots,x_n]^G)$. In addition assume the following properties:
\begin{itemize}
	\item $X$ has an isolated singularity at $0$;
	\item there exists a projective resolution $\rho: Y \rightarrow X$. 
\end{itemize}

One way to understand McKay correspondence is via the bijection $$\left\{
\begin{aligned}
	&\mbox{nontrivial~irreducible}\\
	&\mbox{representations~of  }G
\end{aligned}\right\}\overset{1:1}{\longleftrightarrow}
\left\{
\begin{aligned}
	&	\mbox{irreducible ~components}\\
	&	\mbox{of ~the ~central~ fiber } \rho^{-1}(0)
\end{aligned}\right\}.$$

 Let $Coh_G(\mathbb{C}^n)$ be the category of $G$-equivariant coherent sheaves on $\mathbb{C}^n$ and $Coh(Y)$
be the category of coherent sheaves on $Y$. In modern language the McKay correspondence is usually understood as an equivalence of triangulated categories $D_G^b(Coh(\mathbb{C}^n))$ and $D^b(Coh(Y))$. Such an equivalence is known to hold in the following cases: 
\begin{itemize}
	\item $G\subset SL_2{(\mathbb{C})}$, any $G$ (\cite{KV});
	\item $G\subset SL_3{(\mathbb{C})}$, any $G$, $Y=G\operatorname{-}\mbox{Hilb}(\mathbb{C}^3)$ (\cite{BKR});
	\item $G\subset SL_3{(\mathbb{C})}$, any abelian $G$ and any crepant resolution $(Y,\rho)$ (\cite{CI});
	\item  $G\subset SP_{2n}(\mathbb{C})$, any $G$ and crepant symplectic resolution  $(Y,\rho)$ (\cite{BK});
	\item $G\subset SL_n{(\mathbb{C})}$, any abelian $G$,  any crepant symplectic resolution $(Y,\rho)$ (\cite{KawT}).
\end{itemize}

Any finite-dimensional representation $V$ of $G$ gives rise to two equivariant sheaves on
$\mathbb{C}^n$: 
\begin{itemize}
	\item the skyscraper sheaf $V^!=V\otimes_{\mathbb{C}} \mathcal{O}_{0}$, whose fiber at $0$ is $V$ and all the other fibers vanish;
	\item  the locally free sheaf $\widetilde{V}=V\otimes_{\mathbb{C}} \mathcal{O}_{\mathbb{C}^n}$.
\end{itemize}

 Suppose the map $\Psi: D^b_G(\mathbb{C}^n)\overset{\sim}{\rightarrow }D^b(Y)$ is an exact equivalence of triangulated categories.

A natural question emerges.  
\begin{qst}
	What are the images of $\widetilde{\chi}$ and $\chi^!$ (for nontrivial irreducible representations $\chi$ of $G$) under the equivalence $\Psi$?
\end{qst}
The former, $\Psi(\widetilde{\chi})$, is a vector bundle of dimension dim$(\chi)$ and is called a tautological or GSp-V sheaf (after Gonzales-Sprinberg and Verdier). Relatively little is known about $\Psi(\chi^!)$.

The following results are due to Kapranov, Vasserot (see \cite{KV}) 
and Cautis, Craw, Logvinenko (see \cite{CCL}).

	\begin{enumerate}
		\item Let $G\subset SL_2(\mathbb{C})$ be a finite subgroup and $\chi \in \mbox{Irr}(G)\setminus triv$. Then $\Psi(\chi^!)\simeq \mathcal{O}_{\mathbb{P}^1_{\chi}}(-1)[1]$.
		\item Let $G\subset SL_3(\mathbb{C})$ be a finite abelian subgroup. Then for any $\rho \in \mbox{Irr}(G)\setminus triv$, the object $\Psi(\chi^!)\in D^b(Coh(Y))$ is pure (here $Y=G\operatorname{-}Hilb(\mathbb{C}^3)$ and an object is called \textbf{pure} provided all cohomology, except in a single degree, vanish). 
\end{enumerate}

Let $\mathcal{C}$ be a $\mathbb{C}$-linear finitary abelian category (the former means that for any two objects $A,B \in \mathcal{C}$ and $i \in \mathbb{Z}_{\geq 0}$ one has $\dim(\mbox{Ext}^i(A,B))<\infty$). There is a way to associate an algebra $\mathcal{H}(\mathcal{C})$ to $\mathcal{C}$, which is called the Hall algebra of $\mathcal{C}$ (see Section $4$ and references therein). In Section $3$ of \cite{KV} it was observed that in case $\mathcal{C}_1$  and  $\mathcal{C}_2$ are  $\mathbb{C}$-linear finitary abelian categories, there is a derived equivalence $\Psi: D^b(\mathcal{C}_1)\rightarrow D^b(\mathcal{C}_2)$ and a collection of objects $\{a_1,\hdots,a_n\}$ in $\mathcal{C}_1$, s.t. $\Psi(a_i)$ are all pure and concentrated in the same degree, then the Hall algebra generated by the objects $\{a_1,\hdots,a_n\}$ is isomorphic to the Hall algebra generated by their images $\{\Psi(a_1),\hdots,\Psi(a_n)\}$.

Next suppose $Q$ is a quiver with no oriented cycles and let $Rep(Q)$ be the category of representations of $Q$. Let $\mathcal{P}_Q$ be the path algebra of $Q$. It is well known that the categories $Rep(Q)$ and $\mathcal{P}_Q\operatorname{-mod}$ are equivalent. The assumptions on $Q$ imply that the category $\mbox{Rep}(Q)$ is hereditary, i.e. $\mbox{Ext}^i(A,B)=0$ for any $i\geq 2$. Furthermore, both $\dim(\mbox{Hom}(A,B))$ and $\dim(\mbox{Ext}^1(A,B))<\infty$, while the presence of equivalence of categories $\mbox{Rep}(Q)\simeq \mathcal{P}_Q\operatorname{-mod}$ with the latter category being abelian guarantees that $\mbox{Rep}(Q)$ is an abelian category as well.   There is a natural way to associate a Kac-Moody Lie algebra $\mathfrak{g}_Q$ to $Q$. Namely, the Cartan matrix for $\mathfrak{g}_Q$ is $C=2\cdot I-A_Q-A_Q^T$, where $A_Q$ is the adjacency matrix of $Q$.

 There is an embedding $U(\mathfrak{n}_-)\rightarrow\mathcal{H}(\mbox{Rep}(Q))$, where $\mathcal{H}(\mbox{Rep}(Q))$ is the Hall algebra for the category of quiver representations. The image is isomorphic to the composition subalgebra of $\mathcal{H}(Rep(Q))$ generated by characteristic functions of simple representations, see Example $4.25$ in \cite{J}. 

Let $G\subset GL_n(\mathbb{C})$ be a finite group. The McKay quiver $Q(G,\mathbb{C}^n)$ is the quiver whose vertices are in bijection with irreducible representations of $G$ joined by $\dim(\Hom_G(\chi_k\otimes \mathbb{C}^n,\chi_\ell))$ edges (possibly none) from vertex $k$ to vertex $\ell$ (here $\chi_k$ and $\chi_{\ell}$ are irreducible representations of $G$). The category of representations of McKay quiver $Q(G,\mathbb{C}^n)$ is equivalent to the category  $\mathcal{P}_{Q(G,\mathbb{C}^n)}/\mathcal{I}\operatorname{-mod}$, where $\mathcal{I}\subset\mathcal{P}_{Q(G,\mathbb{C}^n)}$ is an ideal. For instance, this ideal admits a nice description, when $G$ is abelian (see Section $3.2$). The category of McKay quiver representations will be denoted by $\mbox{Rep}(Q(G,\mathbb{C}^n),\mathcal{R})$ (here $\mathcal{R}$ stands for 'relations', see Section $3.2$ for details). There is an equivalence of abelian categories $\mbox{Rep}(Q(G,\mathbb{C}^n),\mathcal{R})\simeq Coh_G(\mathbb{C}^n)$.

 Suppose $G\subset GL_n(\mathbb{C})$ satisfies the following assumptions:
\begin{enumerate}
	\item  the McKay quiver $Q(G,\mathbb{C}^n)$ contains a subquiver $Q'$ (without oriented cycles) with $\mathcal{I}\cap \mathcal{P}_{Q'}=0$;
	\item there is a derived equivalence $\Psi:  D_G^b(Coh(\mathbb{C}^n))\rightarrow D^b(Coh(Y))$;
	\item  $\Psi$ sends the skyscraper sheaves $\chi^!\in Coh_G(\mathbb{C}^n)$, corresponding to the simple representations in $\mbox{Rep}(Q(G,\mathbb{C}^n),R)$ supported at the vertices of $Q'$, to  pure sheaves concentrated in the same degree.
\end{enumerate}  

Let $\mathcal{H}\langle \{\Psi(\chi_i^!)\}_{i\in Q'_0}\rangle$ be the Hall algebra generated by the images of sheaves corresponding to simple representations of $Q'$ under $\Psi$ and $\mathfrak{n}_-\subset\mathfrak{g}_{Q'}$ stand for the corresponding nilpotent subalgebra of $\mathfrak{g}_{Q'}$.  It follows from the discussion above that one has an isomorphism of algebras (see also diagram \ref{diagram}):

$$\Theta:U(\mathfrak{n}_-)\rightarrow \mathcal{H}\langle \{\Psi(\chi_i^!)\}_{i\in Q'_0}\rangle.$$

 In this paper we present an infinite collection of cyclic finite abelian subgroups of $SL_3(\mathbb{C})$ satisfying conditions $(1)-(3)$ above with $Q'$  any simply laced Dynkin diagram of affine type except $\widetilde{D}_4$ or $\widetilde{A}_0$, hence, produce isomorphisms $U(\mathfrak{n}_-)\overset{\simeq}{\rightarrow}\mathcal{H}\langle \{\Psi(\chi_i^!)\}_{i\in Q'_0}\rangle$ for  $\mathfrak{n}_-\subset \widehat{\mathfrak{g}}$ with $\mathfrak{g}\neq \mathfrak{so}_8$  a simple Lie algebra of type $A,D,E$  and $\widehat{\mathfrak{g}}$ the corresponding  affine Kac-Moody algebra. Let $\varepsilon=e^{2\pi i/r}$ be the primitive root of unity. The indicated family consists of cyclic abelian groups $\varphi:\mathbb{Z}/r\mathbb{Z}\hookrightarrow SL_3(\mathbb{C})$ with $\varphi(1)=\mbox{diag}(\varepsilon, \varepsilon^k, \varepsilon^s)$, where $r=1+k+s$ and 
 \begin{itemize}
 	\item $s\equiv 0 ~(\mbox{mod } k)$,
 	\item $s\equiv 0 ~(\mbox{mod } k+1)$.
 \end{itemize}

Interestingly, as $s$ goes to infinity, the proportion of nontrivial characters $\{\chi~|~H^0(\Psi(\chi^!))\neq 0\}$ tends to $\dfrac{k-1}{k+1}$ (see Corollary \ref{LimRatio}). In particular, as both $s$ and $k$ tend to infinity, for a uniformly randomly chosen character $\chi$, one has that $\Psi(\chi^!)$ is concentrated in degree $0$ with probability $1$. 

The exposition in the paper is organized as follows. In Sections $2-4$ we recall the generalities on McKay correspondence, quiver representations and Hall algebras, respectively. Each section has references for a more detailed overview of the corresponding topic. While these sections contain essentially no new results, the example presented in Section $2$ is important for a better understanding of the construction. Finally, Section $5$ introduces the families of finite cyclic subgroups in $SL_3(\mathbb{C})$ and establishes the isomorphisms of algebras announced above  (Theorems \ref{CohMin1} and \ref{MainThm}). 

	\textbf{Acknowledgement:} I am grateful to Roman Bezrukavnikov and Timothy Logvinenko for enlightening discussions. I would like to thank Timothy Logvinenko for drawing my attention to derived Reid's recipe and explaining how it works. I am indebted to Michael Finkelberg and Olivier Schiffmann for valuable suggestions.  
\section{McKay correspondence}
We start with a quick chronological overview of the subject. In \cite{McKay} John McKay has observed that  for
a finite subgroup $G\subset SL_2(\mathbb{C})$ there is a bijection

$$\left\{
\begin{aligned}
&\mbox{nontrivial~irreducible}\\
&\mbox{representations~of  }G
\end{aligned}\right\}\overset{1:1}{\longleftrightarrow}
\left\{
\begin{aligned}
&	\mbox{irreducible ~components}\\
&	\mbox{of ~the ~central~ fiber } \rho^{-1}(0)
\end{aligned}\right\},$$
where $\rho:Y\rightarrow \mathbb{C}^2/G$ is the minimal resolution of singularities. Notice that $\mathbb{C}[G]$ (the ring of representations of $G$) is naturally isomorphic to $K^G(\mathbb{C}^2)$, the Grothendieck group of
$G$-equivariant coherent sheaves on $\mathbb{C}^2$. Following this observation, in \cite{GSV}, the McKay correspondence was realized
geometrically  as an isomorphism of Grothendieck groups $K^G(\mathbb{C}^2)\rightarrow K(Y)$. Next in \cite{KV} this isomorphism was lifted to an equivalence of triangulated categories of coherent sheaves: $$D^G(\mathbb{C}^2)\rightarrow D(Y).$$ In particular, under this equivalence, $\chi^!:=\chi\otimes\mathcal{O}_0$,
the skyscraper sheaf at $0$ associated to a nontrivial irreducible $G$-representation $\chi$, is mapped to the structure sheaf
of the corresponding exceptional divisor (twisted by $\mathcal{O}(-1)$). Then Bridgeland, King and Reid constructed the equivalence $D^G(\mathbb{C}^3)\rightarrow D(Y)$ for any finite subgroup $G\subset SL_3(\mathbb{C})$ and $Y = G\operatorname{-}\mbox{Hilb}(\mathbb{C}^3)$ (see \cite{BKR}). They showed that $G\operatorname{-}\mbox{Hilb}(\mathbb{C}^3)$  is a crepant resolution of $\mathbb{C}^3$.  It was established that the images of $\chi^!$s are concentrated in a single degree in case $G$ is abelian and $\mathbb{C}^3/G$ has a single isolated singularity (see \cite{CL}). The result was then extended to any finite abelian subgroup $G$
of $SL_3(\mathbb{C})$ in \cite{CCL}. We briefly recall the setup.

\subsection{$G\operatorname{-}\mbox{Hilb}(\mathbb{C})^3$ as a toric variety}  We  refer the reader to Section $2$ of \cite{C1} and \cite{CR} for a more comprehensive and detailed exposition.

Let $G\subset SL_3(\mathbb{C})$ be a finite abelian subgroup of order $r = |G|$, and $\varepsilon=e^{2\pi i/r}$ a primitive root of unity. We diagonalize the action of $G$ and denote the corresponding coordinates on $\mathbb{C}^3$ by $x,y$ and $z$.  The lattice of exponents of Laurent monomials in $x, y, z$ will be denoted by $L=\mathbb{Z}^3$ and the dual lattice by $L^{\vee}$. Associate a vector $v_g=\dfrac{1}{r}(\gamma_1,\gamma_2,\gamma_3)$ to each group element $g=\mbox{diag}(\varepsilon^{\gamma_1},\varepsilon^{\gamma_2},\varepsilon^{\gamma_3})$, define the lattice $N:=L^{\vee}+\sum\limits_{g \in G}\mathbb{Z}\cdot v_g$ (with $N_{\mathbb{R}}=N\otimes_{\mathbb{Z}}\mathbb{R}$) and use $M :=\Hom(N, \mathbb{Z})$ for the dual lattice of $G$-invariant Laurent monomials. The categorical quotient $X=\mbox{Spec}~ \mathbb{C}[x,y,z]^G$ is the toric variety $\mbox{Spec}~ \mathbb{C}[\sigma^{\vee}
\cap M]$ with the cone $\sigma$  being the positive octant $\sigma=\mathbb{R}_{\geq 0}e_i \subset N_{\mathbb{R}}$. 

\begin{defn}
	The \textbf{junior simplex} $\triangle \subset N_{\mathbb{R}}$ is the triangle with vertices $e_x=(1,0,0), e_y=(0,1,0)$ and $e_z=(0,0,1)$. It contains the lattice points $\dfrac{1}{r}(\gamma_1,\gamma_2,\gamma_3)$ with $\gamma_1+\gamma_2+\gamma_3=1, \gamma_i\geq 0$. 
\end{defn}

A subdivision of the cone $\sigma$ gives rise to a fan $\Sigma$ and, hence, a toric variety $X_\Sigma$ together with a birational map $X_\Sigma\rightarrow X$. A triangle inside the junior simplex is called \textbf{basic} in case the pyramid with this triangle as the base and origin as the apex has volume $1$. If a fan $\Sigma$ gives rise to a partition of the junior simplex into basic triangles, then the corresponding map $X_{\Sigma}\rightarrow X$ is a crepant resolution of singularities. Notice that such a fan $\Sigma$ is uniquely determined by the associated triangulation of the junior simplex  into basic triangles (slightly abusing notation we will refer to such a triangulation as $\Sigma$ as well).

\begin{defn}
	A \textbf{G-cluster} is a $G$-invariant zero-dimensional subscheme $Z\subset \mathbb{C}^n$ for which $H^0(\mathcal{O}_Z)$ is isomorphic to the regular representation of $G$  as a $\mathbb{C}[G]$-module. The \textbf{$G\operatorname{-}$Hilbert scheme} is the variety $Y=G\operatorname{-}\mbox{Hilb}(\mathbb{C}^n)$, which is the fine moduli space parameterizing $G$-clusters.
\end{defn}
The $G\operatorname{-}$Hilbert scheme is a toric variety and for $G\subset SL_2(\mathbb{C})$ or $SL_3(\mathbb{C})$ the map $Y\rightarrow X$ is a crepant resolution of singularities. The partition of the junior simplex into basic triangles for finite abelian  $G\subset SL_3(\mathbb{C})$, giving rise to the fan of $Y$, can be computed according to the $3$-step procedure below (see \cite{Nak} and \cite{C1}). Prior to giving the algorithm a definition is due.

\begin{defn}
	Let $r$ and $a$ be coprime positive integers with $r>a$. The
	\textbf{Hirzebruch–Jung continued fraction} of $\dfrac{r}{a}$ is the expression
	$$\dfrac{r}{a}=a_1-\dfrac{1}{a_2-\dfrac{1}{a_3-\hdots}}.$$
	We will also refer to the collection of numbers $[a_1:a_2:\hdots:a_k]$ as the 	\textbf{Hirzebruch–Jung sequence} of $\dfrac{r}{a}$ and denote it by $HJ(r,a)$.
\end{defn}
\begin{rmk}
	Each number $a_i$ in a  Hirzebruch–Jung sequence $[a_1:a_2:\hdots:a_k]$ is greater than or equal to $2$.
\end{rmk}

\begin{ex}
	Let $r\geq 2$ be an integer. The Hirzebruch-Jung continued fraction for $(r,r-1)$ is 	$$\dfrac{r}{r-1}=2-\dfrac{1}{2-\dfrac{1}{2-_{\ddots}\underset{-\dfrac{1}{2}}{}}}$$ with the corresponding sequence $HJ(r,r-1)=[2:2:\hdots:2]$ consisting of an $(r-1)$-tuple of twos.
	\label{Hirz}
\end{ex}

Next we recall an algorithm for producing the partition of $\triangle$ into basic triangles corresponding to $Y$.

\begin{enumerate}
	\item [\textbf{Step} $1$.] Draw line segments connecting the vertices of $\triangle$ to lattice points on the boundary of the convex hull of $\triangle\setminus\{e_x,e_y,e_z\}$ in such a way that the line segments do not cross the interior of $Conv(\triangle\setminus\{e_x,e_y,e_z\})$. Let the Hirzebruch-Jung sequence at a vertex $\zeta\in\{e_x,e_y,e_z\}$   be $HJ_{\zeta}=[k_1:k_2:\hdots :k_s]$. There will be $s+2$ line segments $L^\zeta_0,L^\zeta_1,\hdots,L^\zeta_s,L^\zeta_{s+1}$ emanating from $\zeta$ with $L^\zeta_0$ and $L^\zeta_{s+1}$ being parts of edges between $\zeta$ and the remaining two vertices of $\triangle$. Moreover, $L^\zeta_{i+1}=k_iL^\zeta_i-L^\zeta_{i-1}$ and we label the $i^{th}$ segment with $k_i$ (the edges $L^\zeta_0$ and $L^\zeta_{s+1}$ on the boundary of $\triangle$ receive no label).

	\item [\textbf{Step} $2$.] Extend the lines until they are 'defeated' according to the following rule: when lines meet at a point, the line with greatest label extends and its label value is reduced by $1$ for every 'rival' it defeats; lines meeting with equal labels all terminate at that point. 
	
	\item [\textbf{Step} $3$.] Draw $k-1$ lines parallel to the sides of each \textbf{regular} triangle of side $k$ (lattice triangle with $k+1$ lattice points on each edge) to produce its
	regular tesselation into $k^2$ basic triangles. 
	\label{Algo}
\end{enumerate}
\begin{ex} Let $G=\mathbb{Z}/15\mathbb{Z}\subset SL_3(\mathbb{C})$ with $v_1=\dfrac{1}{15}(1,2,12)$. The exceptional divisors are (the rays corresponding to) the vectors with endpoints $v_k=\dfrac{1}{15}(k_{15},2k_{15},12k_{15})\in \triangle$, i.e. $k_{15}+2k_{15}+12k_{15}=15$ (we use the notation $s_{15}$ for $s$ modulo $15$):

	\begin{align*}
		&E_1=\dfrac{1}{15}(1,2,12), &E_2=\dfrac{1}{15}(2,4,9)\hspace{3in}\\
		&E_3=\dfrac{1}{15}(3,6,6), &E_4=\dfrac{1}{15}(4,8,3)\hspace{3in}\\
		&E_5=\dfrac{1}{15}(5,10,0), &E_6=\dfrac{1}{15}(8,1,6)\hspace{3in}\\
		&E_7=\dfrac{1}{15}(9,3,3), &E_8=\dfrac{1}{15}(10,5,0).\hspace{2.86in}\\	
	\end{align*}

\begin{enumerate}
	\item [\textbf{Step} $1$.] We compute the Hirzebruch-Jung sequences:
	\begin{align*}
		&\dfrac{1}{15}(2,12)\sim \dfrac{1}{15}(1,6)\sim \dfrac{1}{5}(1,2), ~\dfrac{5}{2}=3-\dfrac{1}{2},~HJ_x=HJ(15,2)=[3:2];\hspace{2in}\\
		&\dfrac{1}{15}(12,1)\sim \dfrac{1}{5}(4,1)\sim\dfrac{1}{5}(1,4),~\dfrac{5}{4}=2-\dfrac{1}{2-\dfrac{1}{2-\dfrac{1}{2}}},~HJ_y=HJ(5,4)=[2:2:2:2];\\
		&\dfrac{15}{2}=8-\dfrac{1}{2},~HJ_z=HJ(15,2)=[8:2] \mbox{ and draw}
	\end{align*}

\begin{figure}[htbp!]
		\begin{center}
			
			\begin{tikzpicture}[scale=0.55]
				\draw (0,0) -- (15,0) -- (7.5,13) -- (0,0) -- cycle;
				\draw (0,0) -- (37.5/15,13/15);

				\draw (15,0)  -- (37.5/15,13/15);
				\draw (15,0)  -- (75/15,26/15);
				\draw (15,0)  -- (112.5/15,39/15);
				\draw (15,0)  -- (150/15,52/15);
				
				\draw (7.5,13) -- (5,6.93333);
				\draw (7.5,13) -- (7.5,7.8);
				
				\draw (0,0) -- (5,6.93333);

				\node at (37.5/15,13/15) {$\bullet$};
				\node at (75/15,26/15) {$\bullet$};
				\node at (112.5/15,39/15) {$\bullet$};
				\node at (150/15,52/15) {$\bullet$};
				\node at (187.5/15,65/15) {$\bullet$};
				
				\node at (37.5/15-0.4,13/15+0.4) {$E_1$};
				\node at (75/15-0.5,26/15+0.4) {$E_2$};
				\node at (112.5/15-0.5,39/15+0.3) {$E_3$};
				\node at (150/15-0.5,52/15+0.3) {$E_4$};
				\node at (187.5/15-0.6,65/15+0.2) {$E_5$};
				
				\node at (5,6.93333) {$\bullet$};
				\node at (7.5,7.8) {$\bullet$};
				\node at (10,8.66) {$\bullet$};
				
				\node at (5-0.2,6.93333+0.5) {$E_6$};
				\node at (7,7.8+0.2) {$E_7$};
				\node at (9.5,8.66+0.2) {$E_8$};
				
				\node at (1.3,0.75) {$\mathsmaller{8}$};
			
				\node at (3,4.5) {$\mathsmaller{2}$};
				\node at (6.04,10) {$\mathsmaller{3}$};
				
				\node at (7.7,10) {$\mathsmaller{2}$};
				
				\node at (12.4,2.1) {$\mathsmaller{2}$};
				\node at (10.9,1.7) {$\mathsmaller{2}$};
				\node at (9,1.3) {$\mathsmaller{2}$};
				\node at (7.5,0.75) {$\mathsmaller{2}$};
				\node at (-0.2,-0.2) {$e_z$};
				\node at (15.37,-0.2) {$e_y$};
				\node at (7.5,13.3) {$e_x$};	
			\end{tikzpicture}
			\caption{Step $1$ of the triangulation algorithm for $G=\dfrac{1}{15}(1,2,12)$}
		\end{center}
	\end{figure}
	\newpage
	\item [\textbf{Step} $2$.] Extending the lines according to their labels.
	\newline
	 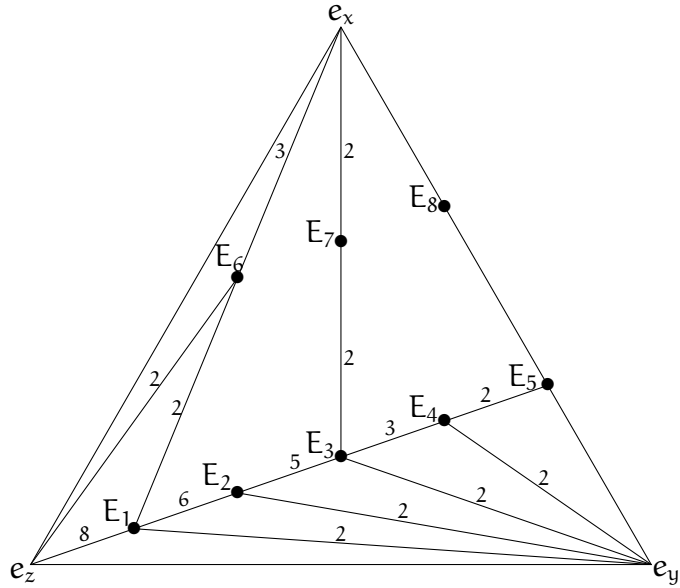
\begin{figure}[htbp!]
		\begin{center}
			
			\begin{tikzpicture}[scale=0.55]
				\draw (0,0) -- (15,0) -- (7.5,13) -- (0,0) -- cycle;
				\draw (0,0) -- (37.5/15,13/15);
				\draw (37.5/15,13/15) -- (75/15,26/15);
				\draw (75/15,26/15) -- (112.5/15,39/15);
				\draw (112.5/15,39/15) -- (150/15,52/15);
				\draw (150/15,52/15) -- (187.5/15,65/15);
				
				\draw (15,0)  -- (37.5/15,13/15);
				\draw (15,0)  -- (75/15,26/15);
				\draw (15,0)  -- (112.5/15,39/15);
				\draw (15,0)  -- (150/15,52/15);
				
				\draw (7.5,13) -- (5,6.93333);
				\draw (7.5,13) -- (7.5,7.8);
				
				\draw (0,0) -- (5,6.93333);

				\draw (5,6.93333)  -- (37.5/15,13/15);
				\draw (7.5,7.8)  -- (112.5/15,39/15);
				
				\node at (37.5/15,13/15) {$\bullet$};
				\node at (75/15,26/15) {$\bullet$};
				\node at (112.5/15,39/15) {$\bullet$};
				\node at (150/15,52/15) {$\bullet$};
				\node at (187.5/15,65/15) {$\bullet$};
				
				\node at (37.5/15-0.4,13/15+0.4) {$E_1$};
				\node at (75/15-0.5,26/15+0.4) {$E_2$};
				\node at (112.5/15-0.5,39/15+0.3) {$E_3$};
				\node at (150/15-0.5,52/15+0.3) {$E_4$};
				\node at (187.5/15-0.6,65/15+0.2) {$E_5$};
				
				\node at (5,6.93333) {$\bullet$};
				\node at (7.5,7.8) {$\bullet$};
				\node at (10,8.66) {$\bullet$};
				
				\node at (5-0.2,6.93333+0.5) {$E_6$};
				\node at (7,7.8+0.2) {$E_7$};
				\node at (9.5,8.66+0.2) {$E_8$};
				
				\node at (1.3,0.75) {$\mathsmaller{8}$};
				\node at (3.7,1.6) {$\mathsmaller{6}$};
				\node at (6.4,2.5) {$\mathsmaller{5}$};
				\node at (8.7,3.3) {$\mathsmaller{3}$};
				\node at (11,4.1) {$\mathsmaller{2}$};
				\node at (3,4.5) {$\mathsmaller{2}$};
				\node at (6.04,10) {$\mathsmaller{3}$};
				\node at (3.53,3.8) {$\mathsmaller{2}$};
				\node at (7.7,10) {$\mathsmaller{2}$};
				\node at (7.7,5) {$\mathsmaller{2}$};	
				\node at (12.4,2.1) {$\mathsmaller{2}$};
				\node at (10.9,1.7) {$\mathsmaller{2}$};
				\node at (9,1.3) {$\mathsmaller{2}$};
				\node at (7.5,0.75) {$\mathsmaller{2}$};
				\node at (-0.2,-0.2) {$e_z$};
				\node at (15.37,-0.2) {$e_y$};
				\node at (7.5,13.3) {$e_x$};	
			\end{tikzpicture}
			\caption{Step $2$ of the triangulation algorithm for $G=\dfrac{1}{15}(1,2,12)$}
		\end{center}
	\end{figure}

\item [\textbf{Step} $3$.] Subdividing regular triangles into basic.  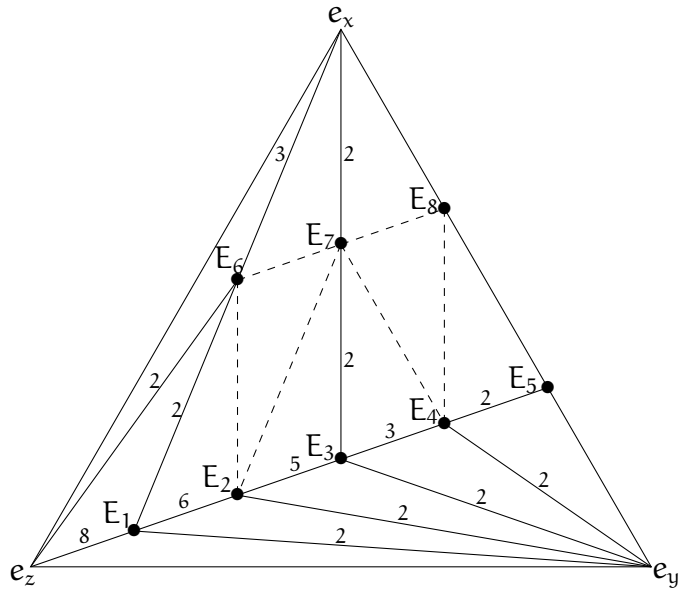
\begin{figure}[htbp!]
	\begin{center}
		
		\begin{tikzpicture}[scale=0.55]
			\draw (0,0) -- (15,0) -- (7.5,13) -- (0,0) -- cycle;
			\draw (0,0) -- (37.5/15,13/15);
			\draw (37.5/15,13/15) -- (75/15,26/15);
			\draw (75/15,26/15) -- (112.5/15,39/15);
			\draw (112.5/15,39/15) -- (150/15,52/15);
			\draw (150/15,52/15) -- (187.5/15,65/15);
			
			\draw (15,0)  -- (37.5/15,13/15);
			\draw (15,0)  -- (75/15,26/15);
			\draw (15,0)  -- (112.5/15,39/15);
			\draw (15,0)  -- (150/15,52/15);
			
			\draw (7.5,13) -- (5,6.93333);
			\draw (7.5,13) -- (7.5,7.8);
			
			\draw (0,0) -- (5,6.93333);

			\draw (5,6.93333)  -- (37.5/15,13/15);
			\draw (7.5,7.8)  -- (112.5/15,39/15);
			
			\draw [dashed] (5,6.93333) -- (7.5,7.8);
			\draw [dashed] (7.5,7.8) -- (10,8.66);
			\draw [dashed] (7.5,7.8) -- (75/15,26/15);
			\draw [dashed] (7.5,7.8) -- (150/15,52/15);
			\draw [dashed] (5,6.93333) -- (75/15,26/15);
			\draw [dashed] (10,8.66) -- (150/15,52/15);
			
			\node at (37.5/15,13/15) {$\bullet$};
			\node at (75/15,26/15) {$\bullet$};
			\node at (112.5/15,39/15) {$\bullet$};
			\node at (150/15,52/15) {$\bullet$};
			\node at (187.5/15,65/15) {$\bullet$};
			
			\node at (37.5/15-0.4,13/15+0.4) {$E_1$};
			\node at (75/15-0.5,26/15+0.4) {$E_2$};
			\node at (112.5/15-0.5,39/15+0.3) {$E_3$};
			\node at (150/15-0.5,52/15+0.3) {$E_4$};
			\node at (187.5/15-0.6,65/15+0.2) {$E_5$};
			
			\node at (5,6.93333) {$\bullet$};
			\node at (7.5,7.8) {$\bullet$};
			\node at (10,8.66) {$\bullet$};
			
			\node at (5-0.2,6.93333+0.5) {$E_6$};
			\node at (7,7.8+0.2) {$E_7$};
			\node at (9.5,8.66+0.2) {$E_8$};
			
			\node at (1.3,0.75) {$\mathsmaller{8}$};
			\node at (3.7,1.6) {$\mathsmaller{6}$};
			\node at (6.4,2.5) {$\mathsmaller{5}$};
			\node at (8.7,3.3) {$\mathsmaller{3}$};
			\node at (11,4.1) {$\mathsmaller{2}$};
			\node at (3,4.5) {$\mathsmaller{2}$};
			\node at (6.04,10) {$\mathsmaller{3}$};
			\node at (3.53,3.8) {$\mathsmaller{2}$};
			\node at (7.7,10) {$\mathsmaller{2}$};
			\node at (7.7,5) {$\mathsmaller{2}$};	
			\node at (12.4,2.1) {$\mathsmaller{2}$};
			\node at (10.9,1.7) {$\mathsmaller{2}$};
			\node at (9,1.3) {$\mathsmaller{2}$};
			\node at (7.5,0.75) {$\mathsmaller{2}$};
			\node at (-0.2,-0.2) {$e_z$};
			\node at (15.37,-0.2) {$e_y$};
			\node at (7.5,13.3) {$e_x$};	
		\end{tikzpicture}
		\caption{Step $3$ of the triangulation algorithm for $G=\dfrac{1}{15}(1,2,12)$}
	\end{center}
\end{figure} 
\end{enumerate}

\label{1212}	
\end{ex}

\subsection{Reid's recipe} Reid’s recipe (see \cite{Re} and \cite{C1}) is an algorithm to construct the cohomological version of the McKay correspondence for abelian subgroups of $SL_3(\mathbb{C})$. It starts with marking internal edges and vertices of the triangulation $\Sigma$ corresponding to $G\operatorname{-}\mbox{Hilb}(\mathbb{C}^3)$
with characters of $G$. For the purposes of this paper only the marking of edges will be required, which we recall below and refer the reader to Section $3$ of \cite{C1} for details. An edge $(e,f)$ in $\Sigma$ is labeled by a character of $G$ according to the following rule. The one-dimensional ray
in $M$ perpendicular to the hyperplane $\langle e,f\rangle$ in $M_{\mathbb{R}}$ has a primitive generator given by exponents of $\dfrac{m_1}{m_2}$, where $m_1$ and $m_2$ are coprime regular monomials. As $M$ is the lattice of $G$-invariant Laurent
monomials, $m_1$ and $m_2$ have the same character $\chi$ with respect to $G$-action. The edge $(e,f)$ is marked with  character $\chi$.
\begin{ex}
We determine the character that marks the edge	$(e_z,E_1)$:

	$\begin{cases}
		c=0\\
		a+2b+12c=0\\
	\end{cases}\Leftrightarrow a=-2b, c=0,$

hence, $m_1=x^2$ and $m_2=y$ with $\chi=\chi_2$.

The computation below shows that the edge	$(E_6,E_7)$ is marked by $\chi=\chi_1$:

$\begin{cases}
	8a+b+6c=0\\
	9a+3b+3c=0\\
\end{cases}\Leftrightarrow \begin{cases}
3a+b+c=0\\
a+c=0\\
\end{cases} \Leftrightarrow  a=-c, b=2c,$

thus, $m_1=x$ and $m_2=y^2z$.
\end{ex}
\begin{figure}[htbp!]
	\begin{center}
		
		\begin{tikzpicture}[scale=0.6]
			\draw (0,0) -- (15,0) -- (7.5,13) -- (0,0) -- cycle;
			\draw (0,0) -- (37.5/15,13/15);
			\draw (37.5/15,13/15) -- (75/15,26/15);
			\draw (75/15,26/15) -- (112.5/15,39/15);
			\draw (112.5/15,39/15) -- (150/15,52/15);
			\draw (150/15,52/15) -- (187.5/15,65/15);
			
			\draw (15,0)  -- (37.5/15,13/15);
			\draw (15,0)  -- (75/15,26/15);
			\draw (15,0)  -- (112.5/15,39/15);
			\draw (15,0)  -- (150/15,52/15);
			
			\draw (7.5,13) -- (5,6.93333);
			\draw (7.5,13) -- (7.5,7.8);
			
			\draw (0,0) -- (5,6.93333);
			\draw [dashed] (5,6.93333) -- (7.5,7.8);
			\draw [dashed] (7.5,7.8) -- (10,8.66);
			\draw [dashed] (7.5,7.8) -- (75/15,26/15);
			\draw [dashed] (7.5,7.8) -- (150/15,52/15);
			\draw [dashed] (5,6.93333) -- (75/15,26/15);
			\draw [dashed] (10,8.66) -- (150/15,52/15);
			
			\draw (5,6.93333)  -- (37.5/15,13/15);
			\draw (7.5,7.8)  -- (112.5/15,39/15);
			
			\node at (37.5/15,13/15) {$\bullet$};
			\node at (75/15,26/15) {$\bullet$};
			\node at (112.5/15,39/15) {$\bullet$};
			\node at (150/15,52/15) {$\bullet$};
			\node at (187.5/15,65/15) {$\bullet$};
			
			\node at (37.5/15-0.4,13/15+0.4) {$E_1$};
			\node at (75/15-0.5,26/15+0.4) {$E_2$};
			\node at (112.5/15-0.5,39/15+0.3) {$E_3$};
			\node at (150/15-0.5,52/15+0.3) {$E_4$};
			\node at (187.5/15-0.6,65/15+0.2) {$E_5$};
			
			\node at (5,6.93333) {$\bullet$};
			\node at (7.5,7.8) {$\bullet$};
			\node at (10,8.66) {$\bullet$};
			
			\node at (5-0.2,6.93333+0.5) {$E_6$};
		\node at (7,7.8+0.2) {$E_7$};
		\node at (9.5,8.66+0.2) {$E_8$};
			
			\node at (1.3,0.75) {$\mathsmaller{\chi_2}$};
			\node at (3.7,1.6) {$\mathsmaller{\chi_2}$};
			\node at (6.4,2.5) {$\mathsmaller{\chi_2}$};
			\node at (8.7,3.3) {$\mathsmaller{\chi_2}$};
			\node at (11,4.1) {$\mathsmaller{\chi_2}$};
			\node at (3.3,5) {$\mathsmaller{\chi_1}$};
			\node at (6.4,7.7) {$\mathsmaller{\chi_1}$};
			\node at (8.7,8.43) {$\mathsmaller{\chi_1}$};
			\node at (7.9,10) {$\mathsmaller{\chi_3}$};	
			\node at (8.9,6) {$\mathsmaller{\chi_3}$};
			\node at (12.7,2) {$\mathsmaller{\chi_3}$};
			\node at (37.5/15+3,13/15) {$\mathsmaller{\chi_{12}}$};
			\node at (37.5/15+6.2,13/15+0.5) {$\mathsmaller{\chi_{9}}$};
			\node at (37.5/15+2.9,13/15+4.5) {$\mathsmaller{\chi_{9}}$};
			\node at (37.5/15+4.1,13/15+3.5) {$\mathsmaller{\chi_{10}}$};
			\node at (37.5/15+1.6,13/15+2.5) {$\mathsmaller{\chi_{12}}$};
			\node at (37.5/15+5.4,13/15+4.5) {$\mathsmaller{\chi_{6}}$};
			\node at (37.5/15+7.9,13/15+4.9) {$\mathsmaller{\chi_{4}}$};
			\node at (112.5/15+3.3,39/15-0.8) {$\mathsmaller{\chi_{6}}$};
			\node at (6.8,10) {$\mathsmaller{\chi_{12}}$};
			\node at (-0.2,-0.2) {$e_z$};
			\node at (15.37,-0.2) {$e_y$};
			\node at (7.5,13.3) {$e_x$};	
		\end{tikzpicture}
		\caption{$\Sigma$ fan and character marking for $G=\dfrac{1}{15}(1,2,12)$}
		\label{JuniorSimplex1212}
	\end{center}
\end{figure}
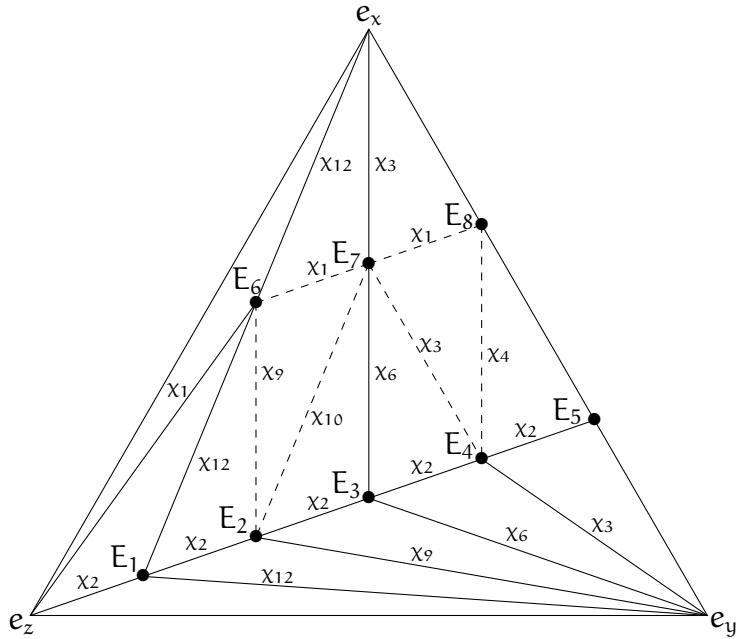
\subsection{Results for abelian subgroups $G\subset SL_3(\mathbb{C})$}

Let $G\subset SL_n({\mathbb{C}})$ be a finite subgroup. The following result appeared in the celebrated paper of Bridgeland, King and Reid (see \cite{BKR}).

\begin{thm}
	Let $G\subset SL_n{\mathbb{C}}$ be a finite subgroup with $n\leq 3$.
	\begin{enumerate}
		\item The variety $G\operatorname{-}\mbox{Hilb}(\mathbb{C}^n)$ is irreducible and the resolution $Y\rightarrow X$ is crepant.
		\item The map $\Psi: D^b_G(\mathbb{C}^n)\rightarrow D^b(Y)$ is an exact equivalence of triangulated categories.
	\end{enumerate}
\label{BKRmain}
\end{thm}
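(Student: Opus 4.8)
The plan is to realise $\Psi$ as the inverse of a Fourier--Mukai functor and to establish that this functor is an equivalence by means of a general orthogonality criterion, from which both the irreducibility/smoothness of $Y$ and the crepancy of $\tau\colon Y\to X$ will follow. Since $Y=G\operatorname{-}\mbox{Hilb}(\mathbb{C}^n)$ is a fine moduli space, it carries a universal $G$-cluster $\mathcal{Z}\subset Y\times\mathbb{C}^n$, finite and flat of degree $|G|$ over $Y$ and $G$-invariant. Its structure sheaf $\mathcal{O}_{\mathcal{Z}}$, regarded as a $G$-equivariant sheaf with $G$ acting trivially on $Y$, is the kernel of an integral functor $\Phi\colon D^b(Coh(Y))\to D^b_G(Coh(\mathbb{C}^n))$, given by $\Phi(-)=Rp_{2*}(p_1^*(-)\otimes^{L}\mathcal{O}_{\mathcal{Z}})$. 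Over the locus of free orbits $\Phi$ is visibly the tautological equivalence, and it sends a skyscraper $\mathcal{O}_y$ to the structure sheaf of the corresponding cluster $Z_y$; I would then take $\Psi$ to be a quasi-inverse of $\Phi$.

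The heart of the argument is a criterion for when such a functor is an equivalence. I would prove the following: if $Y$ is smooth quasi-projective of dimension $n$, $M$ is smooth of dimension $n$ carrying a finite $G$-action with $\omega_M$ $G$-equivariantly trivial, and $\Phi\colon D^b(Coh(Y))\to D^b_G(Coh(M))$ is an integral functor whose kernel is the structure sheaf of a flat family of $G$-clusters, then $\Phi$ is an equivalence provided
\[
\dim\bigl(Y\times_X Y\bigr)\le n+1 .
\]
The proof uses a Bondal--Orlov/Bridgeland--Maciocia style argument: the skyscrapers $\{\mathcal{O}_y\}_{y\in Y}$ form a spanning class of $D^b(Coh(Y))$, so it suffices to check the orthogonality conditions $\Hom^{i}(\Phi\mathcal{O}_{y_1},\Phi\mathcal{O}_{y_2})=0$ for all $i$ when $y_1\neq y_2$, together with $\Hom^{0}(\Phi\mathcal{O}_y,\Phi\mathcal{O}_y)=\mathbb{C}$ and vanishing outside degrees $0,\dots,n$. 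These groups are computed by the cohomology of $\mathcal{O}_{\mathcal{Z}}\otimes^{L}\mathcal{O}_{\mathcal{Z}}$, which is supported on $Y\times_X Y$; the dimension bound, fed into the new intersection theorem of commutative algebra (controlling amplitude and depth against codimension), forces the off-diagonal and out-of-range groups to vanish. Full faithfulness follows, and because $\omega_M$ is $G$-trivial the functor intertwines the Serre functors of the two categories, upgrading full faithfulness to an equivalence.

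It then remains to verify the numerical hypothesis $\dim(Y\times_X Y)\le n+1$ for $n\le 3$. Stratifying $X=\mathbb{C}^n/G$ by singularity type and using the semismallness-type identity
\[
\dim\bigl(Y\times_X Y\bigr)=\max_{\alpha}\bigl(\dim X_\alpha+2\dim\tau^{-1}(x_\alpha)\bigr),
\]
I would bound the fibre dimensions of $\tau$ stratum by stratum. For $n=2$ the only singular point is the origin, whose fibre is a curve, giving $0+2=n$. For $n=3$ the fibre over the origin is a surface, contributing $0+4=n+1$, while over a one-dimensional stratum of transverse $A_k$-singularities the fibre is a curve, contributing $1+2=3\le n+1$; the free locus contributes $n$. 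These a priori fibre bounds I would extract from the explicit toric description of $Y$, where the length-$|G|$ condition on clusters together with the junior-simplex triangulation constrains how large a fibre can be. Granting the criterion, $\Phi$ is an equivalence; transport of finite global dimension across the equivalence forces $Y$ to be smooth of pure dimension $n$, and since the equivalence identifies $D^b(Coh(Y))$ for the component containing the free orbits with all of $D^b_G(Coh(\mathbb{C}^n))$ there can be no further components, giving irreducibility. Finally, the Serre-functor matching yields $\omega_Y\cong\mathcal{O}_Y\cong\tau^*\omega_X$, so $\tau$ is crepant.

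The main obstacle is the dimension estimate $\dim(Y\times_X Y)\le n+1$, in two respects. Establishing the general equivalence criterion requires the full strength of the intersection theorem, the passage from a codimension bound on the support of $\mathcal{O}_{\mathcal{Z}}\otimes^{L}\mathcal{O}_{\mathcal{Z}}$ to the amplitude and vanishing of its cohomology being the delicate homological point; and the case $n=3$ of the fibre-dimension verification is genuinely subtle, since the geometry of $G\operatorname{-}\mbox{Hilb}(\mathbb{C}^3)$ over the deepest stratum is where the bound is attained with equality and where one must rule out larger fibres. Establishing a priori that $Y$ is smooth, rather than merely deducing it after the fact, is the other place that needs care.
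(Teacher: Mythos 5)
First, a point of order: the paper you are being checked against contains no proof of Theorem \ref{BKRmain} at all --- it is quoted from Bridgeland--King--Reid \cite{BKR} --- so the only meaningful comparison is with the argument of that cited paper. Your proposal is, in outline, precisely that argument: the integral functor with kernel $\mathcal{O}_{\mathcal{Z}}$, orthogonality of the images of skyscrapers tested on the spanning class, the new intersection theorem converting the bound $\dim(Y\times_X Y)\le n+1$ into the required vanishing, a stratified verification of that bound for $n\le 3$, and the final extraction of smoothness, irreducibility and crepancy from the equivalence. So you have reconstructed the right proof; the problems are with how you have arranged it.

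There are two genuine gaps. (i) \emph{Circularity about smoothness.} Your equivalence criterion hypothesizes ``$Y$ smooth quasi-projective of dimension $n$'', and your upgrade from full faithfulness to equivalence invokes the Serre functor of $D^b(Coh(Y))$, which exists in the required form only when $Y$ is smooth. But smoothness of $Y$ is part of conclusion (1) (a crepant resolution is in particular smooth), and you then claim to deduce it from the equivalence: as arranged, the argument assumes what it proves, a tension you concede in your final sentence but do not resolve. In \cite{BKR} the criterion is stated for $Y$ merely the reduced irreducible component of the Hilbert scheme containing the free orbits; adjoints for the functor are built using only smoothness of $\mathbb{C}^n$ and flatness/finiteness of $\mathcal{Z}$ over $Y$, all duality is performed on the equivariant $\mathbb{C}^n$ side (where the canonical sheaf is $G$-trivial), and the intersection theorem is used not only for orthogonality but to show that $\mathcal{O}_y$ is identified, via the unit of adjunction, with the image of a perfect complex, so that $\mathcal{O}_{Y,y}$ has finite global dimension and $Y$ is regular at $y$ by Serre's criterion. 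Smoothness comes \emph{out} of the homological algebra before any Serre functor on $Y$ is ever mentioned. (ii) \emph{The dimension bound.} You propose to verify $\dim(Y\times_X Y)\le n+1$ for $n=3$ using ``the explicit toric description of $Y$'' and the junior simplex; this description exists only for abelian $G$, whereas the theorem concerns every finite subgroup $G\subset SL_3(\mathbb{C})$. The bound is in fact general and cheaper than you make it: since $G\subset SL_3(\mathbb{C})$, every nontrivial element fixes a linear subspace of dimension at most $1$, so $\dim \operatorname{Sing}(X)\le 1$; since $\tau$ is birational and $Y$ irreducible, the exceptional locus is a proper closed subset of $Y$, which forbids a positive-dimensional family of $2$-dimensional fibres; the surviving contributions to your semismallness formula are then $\max(3+0,\,1+2,\,0+4)=n+1$. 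No toric input is needed, and for nonabelian $G$ none is available.
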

Let $\chi$ be an irreducible representation  of $G$. There are two natural $G$-equivariant sheaves on $\mathbb{C}^n$ associated to $\chi$:
\begin{itemize}
	\item $\widetilde{\chi}:=\chi\otimes \mathcal{O}_{\mathbb{C}^n}$
	\item $\chi^!:=\chi\otimes\mathcal{O}_0$,
\end{itemize}
 where $\mathcal{O}_0=\mathcal{O}_{\mathbb{C}^n}/\mathfrak{m}_0= \mathbb{C}[x_1,x_2,\hdots,x_n]/(x_1,x_2,\hdots,x_n)$ is the structure sheaf of the origin in $\mathbb{C}^n$.

The image $\Psi(\widetilde{\chi}\otimes \mathcal{O}_{\mathbb{C}^n})$ admits a straightforward description. It is isomorphic to 
$L_\chi^{\vee}$, where $L_\chi$ is the corresponding tautological vector bundle. The tautological vector bundles on $Y$ are defined as direct summands in the decomposition  $$p_*(\mathcal{O}_{\mathcal{Z}})=\bigoplus\mathcal{L}_\chi\otimes\chi,$$ with respect to the trivial $G$-action on $Y$ (here $\mathcal{Z}\subset Y\times \mathbb{C}^n$ is the universal subscheme and $p$ the projection on $Y$).

However, the images of skyscraper sheaves $\chi^!$ are more complicated to describe. In case of abelian $G$ the main result of \cite{CCL} provides such a description.
\begin{thm}
\label{CCLthm}
Let $G\subset SL_3(\mathbb{C})$ be a finite abelian subgroup and let
$\chi$ be an irreducible representation of $G$. Then $H^i(\chi^!)=0$ unless $i \in \{0,-1,-2\}$. Moreover, one of the following holds:
 {\renewcommand{\arraystretch}{1.8}	\begin{table}[ht]
		\begin{center}
			\resizebox{16cm}{!}{
				\begin{tabular}{ |c|c|c|c|} 
					\hline
					Reid's recipe& $H^{-2}(\Psi(\chi^!))$& $H^{-1}(\Psi(\chi^!))$& $H^{0}(\Psi(\chi^!))$ \\ 
					\hline
					$\chi$ marks a single divisor $E$& $0$& $0$& $\mathcal{L}^{-1}_{\chi}\otimes\mathcal{O}_E$ \\ 
					\hline
					$\chi$ marks a single curve $C$& $0$& $0$& $\mathcal{L}^{-1}_{\chi}\otimes\mathcal{O}_C$ \\ 
					\hline
					\pbox{13cm}{$\chi$ marks a chain of divisors\\ starting at $E$ and terminating at $F$}& $0$& $\mathcal{L}^{-1}_{\chi}(-E-F)\otimes\mathcal{O}_Z$& $0$ \\ 
					\hline
					\pbox{13cm}{$\chi$ marks three chains of divisors, starting at \\ $E_x$, $E_y$ and $E_z$ and meeting at a divisor $P$}& $0$& $\mathcal{L}^{-1}_{\chi}(-E_x-E_y-E_z)\otimes\mathcal{O}_{\mathcal{V}_Z}$& $0$ \\ 
					\hline
					$\chi=\chi_0$ & $w_{ZF_2}$& $w_{ZF_1}(ZF_2)$& $0$ \\ 
					\hline
			\end{tabular}}
			\caption{Observed values}
		\end{center}
\end{table}}
\label{LogvMain}	
\end{thm}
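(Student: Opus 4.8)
The plan is to avoid computing the Fourier--Mukai transform $\Psi=\Phi_{\mathcal{O}_{\mathcal{Z}}}$ on the skyscraper directly, and instead to resolve $\mathcal{O}_0$ and push the resolution through $\Psi$, where the images of the locally free generators are already known. Write $V=\mathbb{C}^3$ for the standard representation, so that the coordinates $x,y,z$ span $V^\vee$, and note that $\Lambda^3 V^\vee=\det V^\vee$ is the trivial character because $G\subset SL_3(\mathbb{C})$. The Koszul complex
$$0\to \mathcal{O}_{\mathbb{C}^3}\otimes\Lambda^3 V^\vee\to \mathcal{O}_{\mathbb{C}^3}\otimes\Lambda^2 V^\vee\to \mathcal{O}_{\mathbb{C}^3}\otimes V^\vee\to \mathcal{O}_{\mathbb{C}^3}\to \mathcal{O}_0\to 0$$
is then a $G$-equivariant resolution. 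Tensoring by $\chi$ and decomposing each exterior power into characters (possible since $G$ is abelian) exhibits $\chi^!\in D^b_G(\mathbb{C}^3)$ as a complex whose terms are sums of the sheaves $\widetilde{\chi'}$. Applying $\Psi$ and substituting the known values $\Psi(\widetilde{\chi'})\simeq \mathcal{L}^\vee_{\chi'}$ recalled above presents $\Psi(\chi^!)$, up to quasi-isomorphism, by the explicit four-term complex of tautological line bundles
$$K^\bullet_\chi:\qquad \mathcal{L}^\vee_{\chi}\to\bigoplus_{j<\ell}\mathcal{L}^\vee_{\chi\, a_j a_\ell}\to\bigoplus_{j}\mathcal{L}^\vee_{\chi\, a_j}\to \mathcal{L}^\vee_{\chi}$$
placed in cohomological degrees $-3,-2,-1,0$, where $a_1,a_2,a_3$ are the characters of $G$ on $V^\vee$ and the right-hand term is $\mathcal{L}^\vee_{\chi\, a_1 a_2 a_3}=\mathcal{L}^\vee_\chi$ because $a_1 a_2 a_3$ is trivial.

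The entire statement is thereby reduced to a cohomology computation for the concrete complex $K^\bullet_\chi$ on the toric variety $Y$. The degree bound comes for free: since $\chi^!$ is supported over $0\in X$, the transform $\Psi(\chi^!)$ is set-theoretically supported on the exceptional fibre $\rho^{-1}(0)$, and the leftmost differential is $\Psi$ of the injective top Koszul map, hence a nonzero---and therefore injective---map out of the line bundle $\mathcal{L}^\vee_\chi$. Thus $H^{-3}(K^\bullet_\chi)=0$ and the cohomology of $\Psi(\chi^!)$ is concentrated in degrees $\{0,-1,-2\}$, matching the three columns of the table.

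To identify the cohomology sheaves I would work locally on the affine toric charts of $Y$ indexed by the basic triangles of the triangulation $\Sigma$. On such a chart every $\mathcal{L}_{\chi'}$ is trivial, and the differentials of $K^\bullet_\chi$ become multiplication by monomials; these are precisely the coprime regular monomials that Reid's recipe attaches to the edges of $\Sigma$, so the locus where a differential drops rank is exactly the union of divisors and curves marked by the relevant characters. Tracking kernels and cokernels then turns each of the five marking patterns---$\chi$ marking a single divisor $E$, a single curve $C$, an open chain of divisors from $E$ to $F$, three chains meeting at a central divisor, or the exceptional case $\chi=\chi_0$---into the corresponding row of the table, with the twists $\mathcal{L}^\vee_\chi(-E-F)$, the scheme structures on $Z$ and $\mathcal{V}_Z$ carrying $H^{-1}$, and the weight sheaves $w_{ZF_i}$ for $\chi_0$ emerging from the precise form of the degeneracy.

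I expect the matching in the previous paragraph to be the main obstacle, on two fronts. Combinatorially, one must prove that the five patterns are exhaustive and mutually exclusive---that each nontrivial $\chi$ marks the fan in exactly one of these ways---which is the genuinely non-formal input and relies on the structure of $G\operatorname{-}\mathrm{Hilb}(\mathbb{C}^3)$ from \cite{BKR} together with the analysis of Reid's recipe in \cite{C1}. Geometrically, one must check that the sheaves computed chart by chart glue correctly across the reducible exceptional locus and carry the stated (possibly non-reduced) scheme structures, which forces simultaneous control of all the differentials rather than chart-by-chart bookkeeping. The trivial character $\chi=\chi_0$, which marks no edge at all, will need separate treatment and is where the self-intersection data of the compact divisors enters through the weight sheaves.
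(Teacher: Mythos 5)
This theorem is not proved in the paper at all: it is the main result of \cite{CCL} (building on \cite{CL}), quoted as background with the table of outcomes, so there is no in-paper proof to compare your attempt against. Judged on its own terms, your outline does reproduce the opening strategy of the actual proof in the cited literature: resolve $\mathcal{O}_0$ by the $G$-equivariant Koszul complex, tensor by $\chi$, use $\Psi(\widetilde{\chi'})\simeq \mathcal{L}^{\vee}_{\chi'}$ to represent $\Psi(\chi^!)$ by a four-term complex of dual tautological line bundles in degrees $[-3,0]$, and kill $H^{-3}$ by observing that the leftmost differential is nonzero (full faithfulness of $\Psi$) and that a nonzero map out of a line bundle into a locally free sheaf on the integral variety $Y$ is injective. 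That reduction, including the observation that $\Lambda^3 V^{\vee}$ is trivial because $G\subset SL_3(\mathbb{C})$, is correct and is indeed how the degree bound $i\in\{0,-1,-2\}$ is obtained.

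The gap is that everything after this reduction --- which is where the entire mathematical content of the theorem lives --- is deferred rather than proved, and you say so yourself. Concretely: (i) the identification of the degeneracy loci of the monomial differentials on toric charts with the loci marked by Reid's recipe is itself a nontrivial theorem (``geometric Reid's recipe'', the subject of \cite{C1} and \cite{CL}), not a bookkeeping step; (ii) the claim that every nontrivial character marks the fan in exactly one of the listed patterns (single divisor, single curve, open chain, three chains meeting at a divisor) is the key combinatorial classification, and you give no argument for it; (iii) extracting the kernels and cokernels of the four-term complex and gluing them across charts into the stated sheaves, with the precise twists $\mathcal{L}^{-1}_{\chi}(-E-F)$ and the (possibly non-reduced) scheme structures on $Z$ and $\mathcal{V}_Z$, together with the separate treatment of $\chi=\chi_0$, occupies most of the roughly fifty pages of \cite{CCL}. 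So what you have is a faithful reduction of the statement to its actual core --- matching the first step of the known proof --- but not a proof: as written, it would not let a reader verify even the single-divisor row of the table.
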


\begin{rmk}
		Apriori, each object $\Psi(\chi^!)$ is an abstract complex in $D^b(Y)$. It follows from Theorem \ref{CCLthm}  that for every nontrivial character $\chi$ the object $\Psi(\chi^!)$ is a pure sheaf (i.e. some shift of a coherent sheaf).
\end{rmk} 
\section{Quivers}
The next ingredient that we need is McKay quivers. A good reference for basic concepts of quivers and representations thereof is the book \cite{DW}. We also invite the reader to look in the paper \cite{IN} for the exposition on realizations of $G\operatorname{-}$Hilbert schemes as moduli spaces of McKay quiver representations. 
\subsection{Generalities}
\begin{defn}
	A \textbf{quiver} $Q=(Q_0,Q_1)$ is a finite  directed graph with finitely many vertices enumerated by the set $Q_0$ and finitely many edges indexed by $Q_1$. Each edge is uniquely determined by the pair of vertices it connects, which we will denote by $t(a)$ and $h(a)$ standing for 'tail' and 'head', respectively.
\end{defn}
\begin{defn}
	A \textbf{path} $p$ in a quiver $Q = (Q_0,Q_1)$
	is a sequence $a_{\ell} a_{\ell-1}\hdots a_1$ of arrows in $Q_1$ such that $t(a_{i+1}) = h(a_i)$ for $i = 1, 2,\hdots ,\ell-1$. In addition, for every vertex $x\in Q_0$ we introduce a path $e_x$.
	
	The \textbf{path algebra} $\mathcal{P}_Q$ is a $\mathbb{C}$-algebra with a basis labeled
	by all paths in $Q$. The multiplication in $\mathcal{P}_Q$ is given by $$p\cdot q:=\begin{cases}
		pq,\hspace{0.3in} \mbox{if } t(p)=h(q)\\
		0,\hspace{0.4in} \mbox{otherwise},
	\end{cases}$$
	where $pq$ stands for the concatenation of paths subject to the conventions that $pe_x = p$ if $t(p) = x$, and $e_xp = p$ if $h(p) = x$.
\end{defn}

\begin{rmk}
	Notice that $\mathcal{P}_Q$ is of finite dimension over $\mathbb{C}$ if and only if $Q$ has no oriented cycles.
	The path algebra has a natural grading by path length with the subring of grade zero spanned
	by the trivial paths $e_x$
	for $x \in Q_0$. It is a semisimple ring, in which the elements $e_x$ are orthogonal idempotents.

\end{rmk}
\begin{defn}
	A \textbf{representation of a quiver} $Q$ consists of a collection of vector spaces $\{V_i\}_{i\in Q_0}$ and linear
	homomorphisms $\alpha_a\in\mbox{Hom}_{\mathbb{C}}(V_{ta},V_{ha})$
	for each arrow $a \in Q_1$.
\end{defn}
Such representations form a category with morphisms being collections of $\mathbb{C}$-linear maps $\psi_i: V_i\rightarrow W_i$ for all $i\in Q_0$ such that the diagrams

\[\begin{tikzcd}
	{V_{ta}} && {V_{ha}} \\
	\\
	{W_{ta}} && {W_{ha}}
	\arrow["{\alpha_a}", from=1-1, to=1-3]
	\arrow["{\psi_{ta}}"', from=1-1, to=3-1]
	\arrow["{\psi_{ha}}", from=1-3, to=3-3]
	\arrow["{\alpha'_a}", from=3-1, to=3-3]
\end{tikzcd}\]
commute. This category will be denoted by $\mbox{Rep}_{\mathbb{C}}(Q)$.
\begin{thm}
	The category $\mbox{Rep}_{\mathbb{C}}(Q)$ is equivalent to the category of finitely-generated left
	$\mathcal{P}_Q$-modules. In particular, $\mbox{Rep}_{\mathbb{C}}(Q)$ is an abelian category.
	\label{RepsPathAlg}
\end{thm}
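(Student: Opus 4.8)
The plan is to exhibit an explicit pair of mutually quasi-inverse functors between $\mbox{Rep}_{\mathbb{C}}(Q)$ and the category $\mathcal{P}_Q\operatorname{-mod}$ of finitely generated left $\mathcal{P}_Q$-modules, built from the complete set of orthogonal idempotents $\{e_x\}_{x\in Q_0}$ recorded in the preceding remark. Since $Q_0$ is finite, these satisfy $e_xe_y=\delta_{xy}e_x$ and $\sum_{x\in Q_0}e_x=1$, and each arrow $a\in Q_1$ with tail $x=t(a)$ and head $y=h(a)$ obeys $a=e_y\,a\,e_x$; these two facts will carry the whole argument.

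First I would construct a functor $F\colon \mathcal{P}_Q\operatorname{-mod}\to \mbox{Rep}_{\mathbb{C}}(Q)$. Given a left module $M$, set $V_x:=e_xM$ for each $x\in Q_0$; completeness and orthogonality of the idempotents give an internal direct sum decomposition $M=\bigoplus_{x\in Q_0}V_x$ of vector spaces. For each arrow $a$ define $\alpha_a\colon V_{t(a)}\to V_{h(a)}$ by $\alpha_a(v)=a\cdot v$, where the identity $a=e_{h(a)}\,a\,e_{t(a)}$ guarantees both that $\alpha_a$ lands in $V_{h(a)}$ and that it depends only on the restriction to $V_{t(a)}$. A module homomorphism $f\colon M\to M'$ commutes with the $e_x$, hence restricts to maps $\psi_x=f|_{V_x}\colon V_x\to V'_x$, and commutation with multiplication by each arrow $a$ is precisely the commuting-square condition defining a morphism in $\mbox{Rep}_{\mathbb{C}}(Q)$; this makes $F$ a functor. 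In the other direction I would define $G\colon \mbox{Rep}_{\mathbb{C}}(Q)\to \mathcal{P}_Q\operatorname{-mod}$ by sending $(\{V_i\},\{\alpha_a\})$ to $M=\bigoplus_{i\in Q_0}V_i$, letting $e_x$ act as the projection onto $V_x$ and each path $p=a_\ell a_{\ell-1}\cdots a_1$ act by the composite $\alpha_{a_\ell}\circ\cdots\circ\alpha_{a_1}$ (zero on summands where the composition is undefined), extended linearly over the path basis; on morphisms $G$ sends $\{\psi_i\}$ to $\bigoplus_i\psi_i$.

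The technical heart of the argument, and the step I expect to be the main obstacle, is verifying that $G$ is well defined as a ring action, i.e. that this assignment respects the multiplication of $\mathcal{P}_Q$: one must check $(p\cdot q)\cdot m = p\cdot(q\cdot m)$ on all basis paths, using exactly the concatenation rule and the conventions $pe_x=p$ if $t(p)=x$ and $e_xp=p$ if $h(p)=x$ from the definition of $\mathcal{P}_Q$, together with the observation that the composite of the maps $\alpha_a$ is formed by applying the arrows of the path in order starting from its tail, so that the composition order matches the order of concatenation. This is essentially a bookkeeping check, but it is where every convention (head versus tail, left versus right action, order of composition) must be pinned down consistently.

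Once $G$ is known to be a functor, a short computation shows that $F\circ G$ and $G\circ F$ are naturally isomorphic to the respective identity functors, the canonical decomposition $M=\bigoplus_{x}e_xM$ furnishing both natural isomorphisms; here one also matches the object classes, noting that a representation with finite-dimensional spaces corresponds to a finitely generated module and conversely. Thus $F$ and $G$ form an equivalence of categories. Finally, since the category of finitely generated modules over any ring is abelian and any equivalence transports the abelian structure, the equivalence yields at once that $\mbox{Rep}_{\mathbb{C}}(Q)$ is abelian, proving the 'in particular' clause.
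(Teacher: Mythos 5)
The paper offers no proof of this theorem to compare against --- it is quoted as a standard fact, with \cite{DW} as the implicit reference --- so your proposal stands or falls on its own. Its core is the standard argument and is correct: the orthogonal idempotents $e_x$ give $M=\bigoplus_{x\in Q_0}e_xM$, the identity $a=e_{h(a)}\,a\,e_{t(a)}$ shows that multiplication by an arrow $a$ carries $e_{t(a)}M$ into $e_{h(a)}M$, and the inverse construction respects concatenation of paths for exactly the bookkeeping reasons you indicate. This much does produce mutually quasi-inverse functors between representations of $Q$ and $\mathcal{P}_Q$-modules.

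Your final paragraph, however, contains two genuine errors, and both arise exactly when $Q$ has oriented cycles (as the McKay quivers in this paper do). First, a finitely generated $\mathcal{P}_Q$-module need not correspond to a representation with finite-dimensional spaces: for the quiver with one vertex and one loop, $\mathcal{P}_Q\simeq\mathbb{C}[t]$, and the module $\mathcal{P}_Q$ itself is finitely generated yet infinite-dimensional, so the ``conversely'' in your object-matching step fails; the dictionary between finite-dimensional representations and finitely generated modules is valid precisely when $\mathcal{P}_Q$ is finite-dimensional, i.e.\ when $Q$ is acyclic (the paper's own remark). Second, your closing appeal to the ``fact'' that finitely generated modules over any ring form an abelian category is false: this requires the ring to be left Noetherian, since otherwise a map of finitely generated modules can fail to have a kernel in the category (consider $R\rightarrow R/I$ for a left ideal $I$ that is not finitely generated; no finitely generated module can satisfy the universal property of the kernel). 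Path algebras with cycles do fail to be Noetherian in general: the quiver with one vertex and two loops has $\mathcal{P}_Q\simeq\mathbb{C}\langle x,y\rangle$, the free associative algebra, which is not left Noetherian. As written, then, your argument proves the theorem only for acyclic $Q$, where finitely generated coincides with finite-dimensional and $\mathcal{P}_Q$ is a finite-dimensional (hence Noetherian) algebra. In fairness, part of this imprecision is inherited from the statement itself; the clean statement valid for every finite quiver is that finite-dimensional representations are equivalent to finite-dimensional $\mathcal{P}_Q$-modules, which form an abelian category over any ring because they are closed under submodules and quotients, and this version suffices for everything the paper does with the result.
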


Often, the algebra of interest is not the path algebra of a quiver $Q$, its quotient by an ideal of relations. A relation in $\mathcal{P}_Q$ is a $\mathbb{C}$-linear combination of paths of length at least two, each with the same head and the same tail. A quiver with relations is a quiver $Q$ together with a finite set of relations  $\mathcal{R}$. A representation of such a quiver is a representation of $Q$ where any composition of maps indexed by subsequent edges in a relation vanishes (is a zero map). Any finite set of relations $\mathcal{R}$ in $Q$ determines a two-sided ideal $\mathcal{I}_\mathcal{R}\subset \mathcal{P}_Q$.
As before, finite-dimensional representations of $(Q, \mathcal{R})$ form a category $\mbox{Rep}_{\mathbb{C}}(Q, \mathcal{R})$. Moreover, the analogue of Theorem \ref{RepsPathAlg} holds true, i.e. there is an equivalence of categories 

\begin{equation}\label{quiverRepRelns}
	\mbox{Rep}_{\mathbb{C}}(Q, \mathcal{R})\simeq \left(\mathcal{P}_Q/\mathcal{I}_\mathcal{R}\right)\operatorname{-} mod. 
\end{equation}

\subsection{McKay Quivers}
Let $G\subset SL(V)$ be a finite subgroup.
\begin{defn}
	The \textbf{McKay quiver} $Q(G,\mathbb{C}^n)$ is the quiver whose vertices are enumerated by irreducible representations of $G$ with $\dim(\Hom_G(\chi_k\otimes \mathbb{C}^n,\chi_\ell))$ arrows (possibly none) from vertex $k$ to vertex $\ell$. Henceforth we assume that $G$ is abelian. Then all irreducible representations of $G$ are one-dimensional and correspond to characters of $G$: $$\mbox{char}(G):=\{\chi:G\rightarrow \mathbb{C}^*\}.$$  In particular, as a representation of $G$, we have $\mathbb{C}^n=\underset{i=1}{\overset{n}{\bigoplus}}\mathbb{C}\chi_i=:\underset{i=1}{\overset{n}{\bigoplus}}\mathbb{C}e_i$ and let $x_1,x_2,\hdots, x_n \in (\mathbb{C}^n)^*$ be the dual basis to $\{e_1,e_2,\hdots, e_n\}$ with $R = \mathbb{C}[x_1, x_2,\hdots, x_n]$ the coordinate ring of $\mathbb{C}^n$. The chain of isomorphisms $\Hom_G(\chi_k\otimes \mathbb{C}^n,\chi_\ell)\simeq\Hom_G(\chi_k\otimes\underset{i=1}{ \overset{n}{\bigoplus}}\mathbb{C}e_i,\chi_\ell)\simeq\underset{i=1}{ \overset{n}{\bigoplus}}\Hom_G(\chi_k\otimes\mathbb{C}e_i,\chi_\ell)$ provides a natural identification of the maps assigned to the arrows in the McKay quiver $Q(G,\mathbb{C}^n)$ with multiplication by $x_i$'s and, hence, impose the relations corresponding to the commutation of the latter: 
	$$\mathcal{I}:=\langle a_i^{\chi\otimes\chi_j}a_j^{\chi}-a_j^{\chi\otimes\chi_i}a_i^{\chi}~|~\chi\in \mbox{char}(G), 1\leq i,j\leq n \rangle\subset \mathbb{C}Q(G,\mathbb{C}^n)$$
(for every vertex $\chi \in Q_0$ there are $n$ arrows with head at $\chi$ denoted by  $a_k^{\chi}: \chi\otimes\chi_k\rightarrow \chi$ and label the arrow $a_k^{\chi}$ by the monomial $x_k$).	

\end{defn}
\begin{ex}
	Let $G=\mathbb{Z}/n\mathbb{Z}$ be embedded in $SL(\mathbb{C}^2)$ via mapping $1$ to $\left(\begin{array}{cc}
		\varepsilon & 0 \\
		0 & \varepsilon^{-1}\\
	\end{array}\right)$ with $\varepsilon=e^{2\pi i/n}$ the primitive $n^{th}$ root of unity. There are $n$  irreducible representations of $G$, one-dimensional, to be denoted by $\chi_0,\chi_1,\hdots,\chi_{n-1}$ and $\mathbb{C}^2\simeq\chi_1\oplus\chi_{n-1}$. We label $x_k:=a_{\chi_1}^{\chi_k}$ and $y_k:=a_{\chi_{n-1}}^{\chi_k}$, then  the ideal of relations is $\mathcal{I}=\langle x_iy_i-x_{i+1}y_{i+1}\rangle$.

\begin{center}
	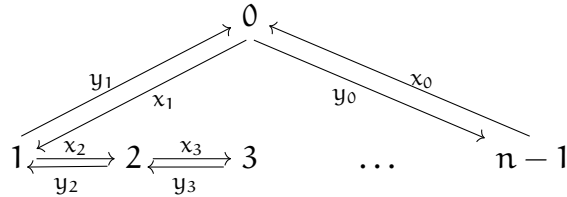
\begin{figure}[h!]
		\begin{tikzpicture}
			\matrix(m)[matrix of math nodes,
			row sep=3em, column sep=2.5em,
			text height=1.5ex, text depth=0.25ex]
			{& & 0\\ 1 & 2 &3 & \hdots  & n-1\\};
			\path[->,font=\scriptsize]
			(m-1-3) +(-0.05,-0.3) edge node[right, yshift=-1mm] {$x_{1}$} (m-2-1)
			(m-2-1) +(0.05,0.3) edge node[left] {$y_{1}$} (m-1-3)  
			(m-1-3) + (0.05,-0.3) edge node[left,yshift=-1mm] {$y_{0}$} (m-2-5)
			(m-2-5) +(-0.05,0.3) edge node[right] {$x_{0}$} (m-1-3) +(0.15,0.3)
			(m-2-1)  edge node[above,yshift=-1mm] {$x_{2}$} (m-2-2)
			(m-2-2) +(-0.35,-0.1) edge node[below] {$y_2$}  (-3.5,-1.05)+(m-2-1) 
			(m-2-2)  edge node[above,yshift=-1mm] {$x_{3}$} (m-2-3)
			(m-2-3) +(-0.35,-0.11) edge node[below] {$y_3$}  (-1.9,-1.05)+(m-2-2) 
			
			;
		\end{tikzpicture}
		\caption{McKay quiver $Q(\mathbb{Z}/n\mathbb{Z},\mathbb{C}^2)$.}
		\label{Slice Quiver}
	\end{figure}
\end{center}
The following Proposition will play an essential role for the construction in Section $5$ (see Lemma $7.5$ and its proof in \cite{C2}).

\begin{prop}
	Let $G \subset GL(\mathbb{C}^n)$ be a finite subgroup. There exists a set of relations $\mathcal{R}$ in the McKay quiver $Q(G,\mathbb{C}^n)$ such that the categories $Rep_{\mathbb{C}}(Q(G,\mathbb{C}^n), \mathcal{R})$ and $Coh_G(\mathbb{C}^n)$ are equivalent.
	\label{GCohandQuiver}
\end{prop}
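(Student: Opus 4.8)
The plan is to realize both categories as modules over one and the same algebra and then compare. First I would use that $\mathbb{C}^n$ is affine with coordinate ring $R=\mathbb{C}[x_1,\dots,x_n]$, so that $Coh_G(\mathbb{C}^n)$ is equivalent to the category of finitely generated $G$-equivariant $R$-modules, i.e. finitely generated $R$-modules $M$ equipped with a $G$-action satisfying $g\cdot(rm)=g(r)\cdot(g\cdot m)$. Such modules are precisely the finitely generated modules over the skew group algebra (smash product) $A:=R\rtimes G$, whose underlying vector space is $R\otimes\mathbb{C}[G]$ with multiplication $(r\otimes g)(s\otimes h)=r\,g(s)\otimes gh$. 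Thus the first reduction gives $Coh_G(\mathbb{C}^n)\simeq A\operatorname{-mod}$. On the other side, Theorem \ref{RepsPathAlg} and the equivalence (\ref{quiverRepRelns}) give $Rep_{\mathbb{C}}(Q(G,\mathbb{C}^n),\mathcal{R})\simeq\left(\mathcal{P}_{Q(G,\mathbb{C}^n)}/\mathcal{I}_\mathcal{R}\right)\operatorname{-mod}$, so it suffices to match the two module categories.

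Next I would produce an algebra isomorphism (for abelian $G$) or a Morita equivalence (in general) between $A$ and $\mathcal{P}_{Q(G,\mathbb{C}^n)}/\mathcal{I}_\mathcal{R}$. Since $\mathbb{C}[G]$ is semisimple, $1=\sum_{\chi}e_\chi$ with $e_\chi=\frac{\dim\chi}{|G|}\sum_{g\in G}\overline{\chi(g)}\,g$ the central idempotents; choosing one primitive idempotent $f_\chi\le e_\chi$ per irreducible character and setting $\epsilon=\sum_\chi f_\chi$, the idempotent $\epsilon$ is full (every irreducible representation of $G$ occurs among the $f_\chi$, and $1\in R$), so $A$ is Morita equivalent to $\epsilon A\epsilon$. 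I would then identify $\epsilon A\epsilon$ with the quiver algebra: the $f_\chi$ are the vertex idempotents $e_\chi$, and the degree-one graded piece $f_{\chi_\ell}\,R_1\,f_{\chi_k}$ has dimension equal to $\dim\Hom_G(\chi_k\otimes\mathbb{C}^n,\chi_\ell)$, which is exactly the number of arrows from vertex $k$ to vertex $\ell$ in the McKay quiver; under the identification of the arrows $a_k^{\chi}$ with multiplication by $x_k$ (as in the definition of $\mathcal{I}$) these become the generators. Concretely, an $A$-module $M$ is sent to the representation with spaces $V_\chi=f_\chi M$ and structure maps given by the $x_k$, while a quiver representation is reassembled by $M=\bigoplus_\chi V_\chi$; these assignments are visibly functorial, mutually inverse on morphisms, and exact. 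For abelian $G$ each irreducible is one-dimensional, the $e_\chi$ are already primitive with $\epsilon=1$, and one obtains an honest isomorphism $A\cong\mathcal{P}_{Q(G,\mathbb{C}^n)}/\mathcal{I}$.

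The crux is to verify that the relations in $\epsilon A\epsilon$ are generated by exactly the defining relations of $R$ transported through the construction, and nothing more; in the abelian case these are the quadratic commutation relations $\mathcal{I}$ described in the text. Here I would use that $R$ is the symmetric (polynomial) algebra, presented as $T(R_1)/(x_ix_j-x_jx_i)$. Because $G$ acts by algebra automorphisms and the semisimple factor $\mathbb{C}[G]$ contributes no relations, transporting this presentation through the smash product and the idempotent truncation yields a presentation of $\epsilon A\epsilon$ whose arrows are the degree-one generators above and whose relations are the images of the commutators. The one point demanding care is that the resulting surjection $\mathcal{P}_{Q(G,\mathbb{C}^n)}/\mathcal{I}_\mathcal{R}\twoheadrightarrow\epsilon A\epsilon$ is injective: I would establish this either by exhibiting explicit mutually inverse maps degree by degree, or by a Hilbert-series comparison, matching the graded dimension of $e_{\chi_\ell}(\mathcal{P}_Q/\mathcal{I}_\mathcal{R})e_{\chi_k}$ with the $\chi_k$-to-$\chi_\ell$ graded isotypic multiplicities in $R$, which are governed by character theory and the known Hilbert series of the polynomial ring. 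I expect this completeness-of-relations step to be the main obstacle, since everything else is a formal consequence of the idempotent decomposition of a semisimple group algebra.
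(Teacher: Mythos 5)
Your proposal is correct and follows essentially the same route as the paper's proof, which is not written out but deferred to Lemma 7.5 of \cite{C2}: that argument likewise identifies $Coh_G(\mathbb{C}^n)$ with finitely generated modules over the skew group algebra $R\rtimes G$ and then matches this algebra, via the idempotent decomposition of $\mathbb{C}[G]$, with the McKay quiver algebra $\mathcal{P}_{Q(G,\mathbb{C}^n)}/\mathcal{I}_{\mathcal{R}}$, the relations being the transported commutators $x_ix_j-x_jx_i$. Your additional care about fullness of $\epsilon$ (giving a Morita equivalence for nonabelian $G$, an isomorphism for abelian $G$) and about injectivity of $\mathcal{P}_Q/\mathcal{I}_{\mathcal{R}}\twoheadrightarrow\epsilon A\epsilon$ (paths modulo commutation relations biject with monomials in the relevant isotypic components, so the graded dimensions agree) is exactly how the cited proof closes those same points.
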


\end{ex}
\section{Hall algebras}
We will use two versions of the Hall algebra construction. The first variant  appeared in \cite{KV} and was described in detail in \cite{J}. A very good overview of Ringel-Hall algebras over finite fields can be found in \cite{S}. 

Let $\mathcal{C}$ be a $\mathbb{C}$-linear abelian finitary category  (the latter means that all extension groups
between any two objects in $\mathcal{C}$ are finite dimensional).

\subsection{Euler characteristic Hall algebra}

The set of isomorphism classes of objects in $\mathcal{C}$ will be denoted by $\mathcal{C}^{iso}$. The space of functions $Fun(\mathcal{C}^{iso})$  can be made into an associative algebra $\mathcal{H}(\mathcal{C})$, called the \textbf{Hall algebra} of $\mathcal{C}$. Let $\mathcal{G}_{\mathcal{C}}$ be the stack formed by pairs $(A, B)$ of objects of $\mathcal{C}$, where $A$ is a subobject of
$B$, and morphisms of such pairs. There are three morphisms $p_1, p_2, p_3: \mathcal{G}_{\mathcal{C}} \rightarrow \mathcal{C}^{iso}$ 
associating to the pair $(A, B)$ the objects $A, B$ and $B/A$, respectively. The fibers of $p_2$ are algebraic
varieties. The multiplication on $\mathcal{H}(\mathcal{C})$ is given by
$$f\star g:=p_{2*}(p_1^*(f)p_3^*(g)).$$

Let $A\in \mathcal{C}$ be an object and $[C]\in \mathcal{H}(\mathcal{C})$ the characteristic function of $A$. The multiplicity of $[C]$ in $[A]\star[B]$ is $\chi(G^C_{AB})$, where $G^C_{AB}=\{A'\subseteq C~|~A'\simeq A, C/A'\simeq B\}$ and  $\chi$ stands for Euler characteristic with compact support.

The following proposition appearing in \cite{KV} (see Section $3.1$) is a consequence of the fact that the heart of a triangulated category is stable under extensions. It will be an essential ingredient of the main construction in this paper.

\begin{prop}
	Let $\mathcal{C}_1, \mathcal{C}_2$ be two finitary abelian categories, and $\varphi: D^b(\mathcal{C}_1) \rightarrow D^b(\mathcal{C}_2)$ an
	equivalence of triangulated categories. If $A, B, C \in \mathcal{C}_1$ are  such that $\varphi(A), \varphi(B) \in \mathcal{C}_2$
	with $G^C_{AB}\neq \varnothing$, then $\varphi(C)\in \mathcal{C}_2$ and $\varphi$ is an isomorphism of complex varieties $G^C_{AB}\simeq G^{\varphi(C)}_{\varphi(A)\varphi(B)}$.
\end{prop}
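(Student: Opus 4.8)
The plan is to use the standard $t$-structures on $D^b(\mathcal{C}_1)$ and $D^b(\mathcal{C}_2)$, whose hearts are $\mathcal{C}_1$ and $\mathcal{C}_2$, and to exploit that $\varphi$ is an exact (triangulated) $\mathbb{C}$-linear functor. I would emphasize at the outset that $\varphi$ need not be $t$-exact, so I cannot simply treat it as an equivalence of abelian categories; everything has to be argued on the specific objects $A,B,C$ and their sub- and quotient objects, the only structural input being that each heart is closed under extensions.

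First I would establish $\varphi(C)\in\mathcal{C}_2$. Since $G^C_{AB}\neq\varnothing$, fix a subobject $A'\subseteq C$ with $A'\simeq A$ and $C/A'\simeq B$; this is a short exact sequence $0\to A'\to C\to C/A'\to 0$ in $\mathcal{C}_1$, hence a distinguished triangle $A'\to C\to C/A'\to A'[1]$ in $D^b(\mathcal{C}_1)$. Applying the exact functor $\varphi$ yields a distinguished triangle $\varphi(A')\to\varphi(C)\to\varphi(C/A')\to\varphi(A')[1]$ in $D^b(\mathcal{C}_2)$ whose outer terms $\varphi(A')\simeq\varphi(A)$ and $\varphi(C/A')\simeq\varphi(B)$ lie in the heart $\mathcal{C}_2$ by hypothesis. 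Running the long exact sequence of the cohomology functors $H^i$ of the standard $t$-structure shows $H^i(\varphi(C))=0$ for $i\neq 0$ and produces a short exact sequence $0\to\varphi(A')\to\varphi(C)\to\varphi(C/A')\to 0$ in $\mathcal{C}_2$; in particular $\varphi(C)\in\mathcal{C}_2$. This is exactly the assertion that the heart is stable under extensions, and it simultaneously records that $\varphi$ carries the chosen short exact sequence to a short exact sequence in $\mathcal{C}_2$.

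Next I would produce the bijection of points. The same argument applies to every subobject parametrized by $G^C_{AB}$: the inclusion $A'\hookrightarrow C$ is a monomorphism in $\mathcal{C}_1$ whose image under $\varphi$, by the triangle-to-sequence passage above, is a monomorphism $\varphi(A')\hookrightarrow\varphi(C)$ in $\mathcal{C}_2$ with $\varphi(A')\simeq\varphi(A)$ and cokernel $\simeq\varphi(B)$, i.e. a point of $G^{\varphi(C)}_{\varphi(A)\varphi(B)}$. Applying the quasi-inverse $\varphi^{-1}$ (and the symmetric version of the first step, noting $\varphi^{-1}(\varphi(C))\simeq C\in\mathcal{C}_1$) gives the inverse assignment, so these maps are mutually inverse bijections on points. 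Finally I would upgrade this to an isomorphism of complex varieties: I would realize $G^C_{AB}$ as the geometric quotient, by the free action of the algebraic group $\Aut(A)$, of the locally closed subset of the vector space $\Hom_{\mathcal{C}_1}(A,C)$ consisting of monomorphisms with cokernel $\simeq B$ (freeness because a monomorphism is left-cancellable). Since $\varphi$ is $\mathbb{C}$-linear and fully faithful, and since for objects of the heart the derived Hom coincides with the Hom in the abelian category, $\varphi$ induces a linear isomorphism $\Hom_{\mathcal{C}_1}(A,C)\xrightarrow{\sim}\Hom_{\mathcal{C}_2}(\varphi(A),\varphi(C))$ intertwining the group isomorphism $\Aut(A)\xrightarrow{\sim}\Aut(\varphi(A))$ and, by the previous step, carrying the defining locus onto its counterpart; it therefore descends to an algebraic morphism $G^C_{AB}\to G^{\varphi(C)}_{\varphi(A)\varphi(B)}$ with algebraic inverse induced by $\varphi^{-1}$.

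I expect the main obstacle to be this last step: the bijection on points is immediate from extension-closedness and functoriality, but certifying that it is an \emph{algebraic} isomorphism requires pinning down the variety structure on $G^C_{AB}$ and checking that $\varphi$ respects it. The $\mathbb{C}$-linearity of $\varphi$ on Hom-spaces is what makes this work, reducing the scheme-theoretic claim to the observation that a linear isomorphism of parameter spaces carries one locally closed, $\Aut$-stable locus isomorphically onto the other.
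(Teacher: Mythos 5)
Your proof is correct and takes essentially the same route as the paper, which states this proposition without proof, citing Section 3.1 of \cite{KV} and noting only that it is ``a consequence of the fact that the heart of a triangulated category is stable under extensions'' --- precisely the content of your first step (the long exact sequence of $t$-structure cohomologies applied to the image triangle). Your remaining steps (the point bijection via monomorphism/cone compatibility, and the realization of $G^C_{AB}$ as the quotient of the locus of monomorphisms in $\Hom_{\mathcal{C}_1}(A,C)$ by the free $\Aut(A)$-action to certify algebraicity) simply fill in details that the paper and \cite{KV} leave implicit, and they are sound.
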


\begin{cor}
Let $\mathcal{C}_1, \mathcal{C}_2$ be two finitary abelian categories, and $\varphi: D^b(\mathcal{C}_1) \rightarrow D^b(\mathcal{C}_2)$ an
equivalence of triangulated categories. If $A_1, A_2, \hdots A_n \in \mathcal{C}_1$  are  such that $\varphi(A_1), \varphi(A_2),$ $\hdots, \varphi(A_n) \in D^b(\mathcal{C}_2)$ have cohomology concentrated in a single degree and that degree is the same for all $A_i$, then $\varphi$ induces an isomorphism of Hall algebras $$\mathcal{H}\langle A_1,A_2,\hdots,A_n\rangle\simeq \mathcal{H}\langle \varphi(A_1),\varphi(A_2),\hdots,\varphi(A_n)\rangle.$$	
\label{HallAlgIso}
\end{cor}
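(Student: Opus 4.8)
The plan is to reduce the statement to an equality of Hall-algebra structure constants, which by the description recalled above are Euler characteristics $\chi(G^C_{AB})$, and then to propagate the single-degree hypothesis through iterated products by repeated application of the Proposition above.

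First I would normalize the common degree. Let $d$ be the degree in which every $\varphi(A_i)$ is concentrated, and replace $\varphi$ by its composite with the shift carrying degree $d$ to degree $0$; this is again an equivalence of triangulated categories, after which each $\varphi(A_i)$ lies in the heart $\mathcal{C}_2\subset D^b(\mathcal{C}_2)$, i.e. is a genuine object of $\mathcal{C}_2$. Since shifting all of the images by one and the same amount is an autoequivalence identifying the subalgebra they generate with the subalgebra generated by the unshifted images, this normalization is harmless, and I may assume $\varphi(A_i)\in\mathcal{C}_2$ for all $i$.

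Next I would set up an inductive closure. The subalgebra $\mathcal{H}\langle A_1,\dots,A_n\rangle$ is spanned by the iterated products $[A_{i_1}]\star\cdots\star[A_{i_k}]$, and by the defining formula each such product is a combination of characteristic functions $[C]$ of those iso-classes $C$ admitting a filtration with successive subquotients $A_{i_1},\dots,A_{i_k}$; equivalently, of those $C$ for which the relevant varieties $G^C_{AB}$ are nonempty. Applying the Proposition above, with $A,B$ ranging over objects already known to have $\varphi$-image in $\mathcal{C}_2$ and starting from the generators themselves, I would prove by induction on $k$ that every such $C$ satisfies $\varphi(C)\in\mathcal{C}_2$ and that $\varphi$ restricts to an isomorphism of complex varieties $G^C_{AB}\simeq G^{\varphi(C)}_{\varphi(A)\varphi(B)}$ at each stage. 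Because Euler characteristic with compact support is invariant under isomorphism of varieties, these isomorphisms give $\chi(G^C_{AB})=\chi(G^{\varphi(C)}_{\varphi(A)\varphi(B)})$, so the assignment $[A_i]\mapsto[\varphi(A_i)]$, extended multiplicatively, carries structure constants to structure constants and defines an algebra homomorphism $\mathcal{H}\langle A_1,\dots,A_n\rangle\to\mathcal{H}\langle\varphi(A_1),\dots,\varphi(A_n)\rangle$. Running the identical argument for the quasi-inverse $\varphi^{-1}$, which again meets the hypotheses of the Proposition, produces a two-sided inverse and hence the desired isomorphism.

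The delicate point, which I expect to be the main obstacle, is precisely the inductive closure step: the Proposition above governs a single extension, and one must verify that it can be iterated so that the family of iso-classes occurring with nonzero coefficient in products of the generators is genuinely carried into $\mathcal{C}_2$ and bijects, compatibly with the variety structure, onto the corresponding family on the $\mathcal{C}_2$ side. The key observation making the induction go through is that at every stage the two objects being multiplied (a generator together with a previously produced extension) both already have $\varphi$-image in the heart, so the hypothesis $\varphi(A),\varphi(B)\in\mathcal{C}_2$ is satisfied and the conclusion $\varphi(C)\in\mathcal{C}_2$ feeds the next stage. One should also confirm that no spurious linear relations are introduced under $\varphi$, which is immediate once the structure constants are seen to agree termwise on both sides.
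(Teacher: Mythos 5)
Your proof is correct, and it is exactly the argument the paper intends: the corollary is stated there without further proof as an immediate consequence of the preceding Proposition, which is precisely what your shift normalization, inductive closure of iterated products, and invocation of the quasi-inverse $\varphi^{-1}$ make explicit. The only point worth stating even more plainly is that the generated Hall algebra $\mathcal{H}\langle \varphi(A_1),\hdots,\varphi(A_n)\rangle$ is by definition taken inside $\mathcal{H}(\mathcal{C}_2)$ after the common shift, which is the convention your normalization step already supplies.
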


\subsection{Ringel-Hall algebras over finite fields}
Assume that $\mathcal{C}$ is a finitary abelian category, such that
\begin{itemize}
	\item gldim$(\mathcal{C})<\infty$;
	\item $|\mbox{Ext}^i(A,B)|<\infty$ for any two objects $A, B \in Ob(\mathcal{C})$ and all $i\geq 0$.
	
\end{itemize} 
\begin{defn}
	The \textbf{multiplicative Euler form} $\langle\cdot,\cdot\rangle:K(\mathcal{C}\times\mathcal{C})\rightarrow \mathbb{C}$ is the form given by $$\langle A,B\rangle:=(\prod\limits_{i=0}^\infty|\mbox{Ext}^i(A,B)|^{(-1)^i})^{1/2}.$$
\end{defn}

Let $\mathcal{P}^C_{A,B}$ be the number of short exact sequences $0\rightarrow B\rightarrow C\rightarrow A\rightarrow 0$ and $$P^C_{A,B}:=\dfrac{\mathcal{P}^C_{A,B}}{|\End(A)||\End(B)|}.$$

Consider the vector space $\mathcal{H}_{fin}(\mathcal{C}):=\underset{A\in \mathcal{C}^{iso}}{\bigoplus}\mathbb{C}[A]$. The following defines  the structure of an associative algebra on $\mathcal{H}_{fin}(\mathcal{C})$:

$$[A]\star_{fin} [B]:=\langle A,B\rangle\sum\limits_C P^C_{A,B}[C]$$

\begin{rmk}
	The unit $i :\mathbb{C}\rightarrow \mathcal{H}_{fin}(\mathcal{C})$ is given by $i(\lambda)= \lambda[0]$, where $0$ is the initial object of $\mathcal{C}$.
\end{rmk}

Let $Q$ be a quiver without oriented cycles, $Rep_{\Bbbk}(Q)$ the category of representations of $Q$ over a finite field $\Bbbk$ and $U_q(\mathfrak{g})$ the quantized enveloping algebra. Here $\mathfrak{g}$ is the Lie algebra associated to the Dynkin diagram formed by $Q$. We denote the simple roots of $\mathfrak{g}$ by $E_i$ and simple representations of $Q$ by $\{S_i\}_{i\in Q_0}$.

The following result was obtained by Ringel and Green (see Theorem $3.15$ in \cite{S}). 

\begin{thm}
Let $v=\sqrt{|\Bbbk|}$. There is an embedding of algebras	$\varphi:U_v(\mathfrak{n}_-)\hookrightarrow \mathcal{H}_{fin}(Rep_{\Bbbk}(Q))$ with $\varphi(E_i)=[S_i]$.  
\label{Ringel}
\end{thm}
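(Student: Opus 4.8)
The plan is to construct $\varphi$ first as an algebra homomorphism, by checking that the classes $[S_i]$ satisfy the defining relations of $U_v(\mathfrak{n}_-)$, and then to upgrade ``homomorphism'' to ``embedding'' using the extra bialgebra structure carried by $\mathcal{H}_{fin}(Rep_{\Bbbk}(Q))$. The starting point is that, since $Q$ has no oriented cycles, $Rep_{\Bbbk}(Q)$ is hereditary, so $\mbox{Ext}^i=0$ for $i\geq 2$ and the multiplicative Euler form collapses to $\langle A,B\rangle = v^{\dim\Hom(A,B)-\dim\mbox{Ext}^1(A,B)}$. On simples this gives $\langle S_i,S_i\rangle = v$ and, for $i\neq j$, $\langle S_i,S_j\rangle = v^{-d_{ij}}$ where $d_{ij}$ is the number of arrows from $i$ to $j$; the symmetrization $\langle S_i,S_j\rangle\langle S_j,S_i\rangle = v^{c_{ij}}$ recovers exactly the off-diagonal Cartan entries $c_{ij}=-d_{ij}-d_{ji}$ of $\mathfrak{g}_Q$. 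This numerical match is what forces the Hall-algebra structure constants to reproduce the $v$-commutators in $U_v(\mathfrak{n}_-)$.

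Writing $\star$ for the finitary product $\star_{fin}$, I would then verify the quantum Serre relations $\sum_{k}(-1)^k\binom{1-c_{ij}}{k}_v\,[S_i]^{\star k}\star[S_j]\star[S_i]^{\star(1-c_{ij}-k)}=0$ directly in $\mathcal{H}_{fin}(Rep_{\Bbbk}(Q))$. The key reduction is that the subalgebra generated by two distinct classes $[S_i],[S_j]$ only involves representations supported on the rank-two subquiver on $\{i,j\}$, so it suffices to treat two cases. When $i$ and $j$ are non-adjacent, $\mbox{Ext}^1(S_i,S_j)=\mbox{Ext}^1(S_j,S_i)=0$ forces the unique extension to be $S_i\oplus S_j$, and comparing the two products yields $[S_i]\star[S_j]=[S_j]\star[S_i]$, the $c_{ij}=0$ relation. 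When they are adjacent, one computes the finitely many Hall numbers $P^C_{A,B}$ from the list of indecomposables of the rank-two quiver, and the resulting constants, after applying the Euler-form twist above, assemble into precisely the $v$-Serre relation. As these exhaust the defining relations of $U_v(\mathfrak{n}_-)$, the assignment $E_i\mapsto [S_i]$ extends to a well-defined algebra map $\varphi$.

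The remaining, and genuinely harder, step is injectivity: one must know that the $v$-Serre relations generate \emph{all} relations among the $[S_i]$, so that the composition subalgebra $\mathcal{H}\langle [S_i]\rangle$ is $U_v(\mathfrak{n}_-)$ rather than a proper quotient. The route I would follow is Green's theorem: equip $\mathcal{H}_{fin}(Rep_{\Bbbk}(Q))$ with Green's comultiplication together with a symmetric Green pairing, prove that this pairing is nondegenerate on the composition subalgebra, and compare it with the Drinfeld pairing on $U_v(\mathfrak{n}_-)$. A surjection of bialgebras between spaces carrying nondegenerate pairings that intertwine it is forced to be an isomorphism onto its image, which makes $\varphi$ injective. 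Equivalently, one can grade everything by the root lattice $\mathbb{Z}^{Q_0}$ and match graded dimensions against a PBW basis of $U_v(\mathfrak{n}_-)$. The main obstacle is exactly this nondegeneracy / no-extra-relations argument: the relation-checking above is a finite (if tedious) computation, whereas injectivity is where the Hopf-algebraic content of the Ringel--Green theorem is indispensable.
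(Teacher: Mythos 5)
The paper does not prove Theorem \ref{Ringel} at all: it is quoted as a known result of Ringel and Green, with the proof deferred to Theorem 3.15 of Schiffmann's lectures \cite{S}, and your outline reconstructs exactly that standard argument (Euler-form twist, Ringel's rank-two verification of the quantum Serre relations, then Green's coproduct and Hopf pairing for injectivity). One precision: the decisive input for injectivity is nondegeneracy of the Drinfeld form on $U_v(\mathfrak{n}_-)$ itself, i.e.\ Lusztig's theorem that the Serre ideal is the radical of that form -- nondegeneracy of Green's form on the composition subalgebra alone, which is what your sketch emphasizes, does not by itself rule out a kernel of the surjection $U_v(\mathfrak{n}_-)\twoheadrightarrow \mathcal{H}\langle [S_i]\rangle$.
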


\section{Main result}
In this section we will combine the results reviewed in Sections $2-4$ to formulate and establish the main theorem of this paper. A few examples prior to giving the general statement will be helpful. 
 \begin{ex} Consider the groups $G=\mathbb{Z}/6\mathbb{Z}$ and $\widetilde{G}=\mathbb{Z}/7\mathbb{Z}$ with $\nu_1=\dfrac{1}{6}(1,1,4)$ and $\widetilde{\nu}_1=\dfrac{1}{7}(1,1,5)$. The partitions of junior simplices into basic triangles corresponding to $G\operatorname{-}\mbox{Hilb}(\mathbb{C}^3)$ and $\widetilde{G}\operatorname{-}\mbox{Hilb}(\mathbb{C}^3)$ are presented on Figure \ref{JuniorSimplex67}. 

 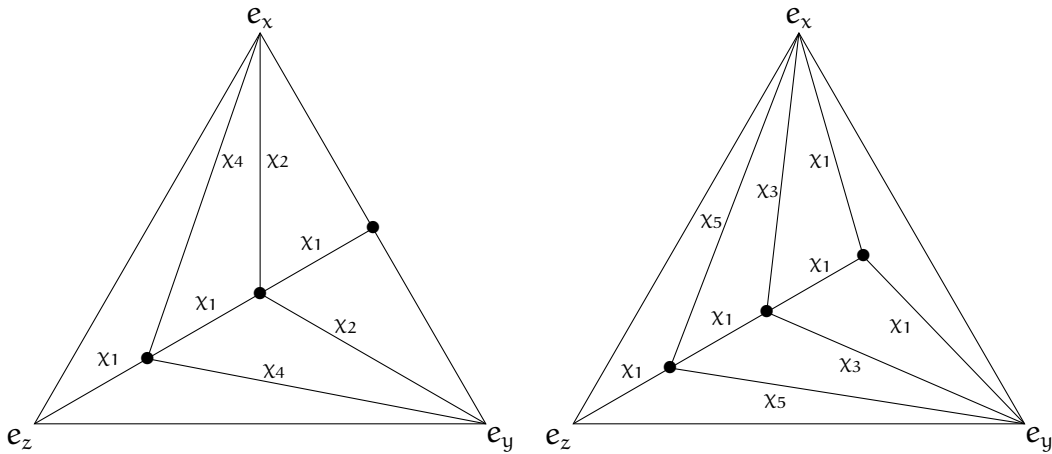
\begin{figure}[htbp!]
 	\begin{center}

 		\begin{tikzpicture}[scale=1]
 		\draw (0,0) -- (6,0) -- (3,5.2) -- (0,0) -- cycle;
 		\draw (0,0) -- (4.5,2.6);
 		\draw (6,0) -- (1.5,5.2/6);
 		\draw (6,0) -- (3,5.2/3);
 		\draw (3,5.2) -- (1.5,5.2/6);
 		\draw (3,5.2) -- (3,5.2/3);
 		\node at (4.5,2.6) {$\bullet$};	
 		\node at (1.5,5.2/6) {$\bullet$};
 		\node at (1,5.2/6) {$\mathsmaller{\chi_1}$};
 		\node at (2.3,1.6) {$\mathsmaller{\chi_1}$};
 		\node at (3.7,2.4) {$\mathsmaller{\chi_1}$};
 		\node at (3.25,3.5) {$\mathsmaller{\chi_2}$};
 		\node at (4.15,1.3) {$\mathsmaller{\chi_2}$};
 		\node at (2.65,3.5) {$\mathsmaller{\chi_4}$};
 		\node at (3.2,0.7) {$\mathsmaller{\chi_4}$};
 		\node at (3,5.2/3) {$\bullet$};	
 		\node at (-0.2,-0.2) {$e_z$};
 		\node at (6.2,-0.2) {$e_y$};
 		\node at (3,5.4) {$e_x$};	
 		\end{tikzpicture}
 	 	\begin{tikzpicture}[scale=1]
 		\draw (0,0) -- (6,0) -- (3,5.2) -- (0,0) -- cycle;
 		\draw (0,0) -- (27/7,15.6/7);
 		\draw (6,0) -- (9/7,5.2/7);
 		\draw (6,0) -- (18/7,10.4/7);
 		\draw (6,0) -- (27/7,15.6/7);
 		\draw (3,5.2) -- (9/7,5.2/7);
 		\draw (3,5.2) -- (18/7,10.4/7);
 		\draw (3,5.2) -- (27/7,15.6/7);
 		\node at (9/7,5.2/7) {$\bullet$};
 		\node at (18/7,10.4/7) {$\bullet$};	
 		\node at (27/7,15.6/7) {$\bullet$};	
 		\node at (0.8,5.2/6-0.17) {$\mathsmaller{\chi_1}$};
 		\node at (2,1.4) {$\mathsmaller{\chi_1}$};
 		\node at (3.3,2.11) {$\mathsmaller{\chi_1}$};
 		\node at (3.3,3.5) {$\mathsmaller{\chi_1}$};
 		\node at (4.37,1.3) {$\mathsmaller{\chi_1}$};
 		\node at (2.6,3.1) {$\mathsmaller{\chi_3}$};
 		\node at (3.7,0.777) {$\mathsmaller{\chi_3}$};
 		\node at (1.85,2.7) {$\mathsmaller{\chi_5}$};
 		\node at (2.7,0.3) {$\mathsmaller{\chi_5}$};
 		\node at (-0.2,-0.2) {$e_z$};
 		\node at (6.2,-0.2) {$e_y$};
 		\node at (3,5.4) {$e_x$};	
 		\end{tikzpicture}
 	
 		\caption{$\Sigma$ fans and character labels for $G=\dfrac{1}{6}(1,1,4)$ and $\widetilde{G}=\dfrac{1}{7}(1,1,5)$}
 		\label{JuniorSimplex67}
 	\end{center}
 \end{figure}
 
 The data on the images of skyscraper sheaves under $\Psi$ appearing in Table \ref{A1TildeTable} is obtained via a direct application of Theorem \ref{LogvMain}. Notice that the McKay quiver $Q(G,\mathbb{C}^3)$ contains a $Q'=\widetilde{A}_1$  subquiver supported on the vertices enumerated by characters $\chi_1$ and $\chi_2$: 
 \[\begin{tikzcd}
	\mathsmaller{1} & \mathsmaller{2}
	\arrow[Rightarrow, from=1-1, to=1-2].
\end{tikzcd}\]
 
 {\renewcommand{\arraystretch}{1.4}	\begin{table}[ht]
 	\begin{center}
 		\resizebox{12cm}{!}{
 			\begin{tabular}{ |c|c|} 
 				\hline
 				$\chi$& $H^{-1}(\Psi(\chi^!))$ \\ 
 				\hline
 				$\chi_1$& $\mathcal{L}^{-1}_{\chi_1}(-E_{z3})\otimes\mathcal{O}_{E_{12}}$ \\ 
 			
 				$\chi_2$& $\mathcal{L}^{-1}_{\chi_2}(-E_{xy})\otimes\mathcal{O}_{E_{2}}$ \\ 
 			
 				$\chi_4$& $\mathcal{L}^{-1}_{\chi_4}(-E_{xy})\otimes\mathcal{O}_{E_{1}}$ \\ 
 				\hline
 		\end{tabular}
 		\quad\quad\quad\quad
			\begin{tabular}{ |c|c|} 
				\hline
				$\chi$& $H^{-1}(\Psi(\chi^!))$ \\ 
				\hline
				$\chi_1$& $\mathcal{L}^{-1}_{\chi_1}(-E_{xyz})\otimes\mathcal{V}_{E_{3}}$ \\ 
			
				$\chi_3$& $\mathcal{L}^{-1}_{\chi_3}(-E_{xy})\otimes\mathcal{O}_{E_{2}}$ \\ 
			
				$\chi_5$& $\mathcal{L}^{-1}_{\chi_5}(-E_{xy})\otimes\mathcal{O}_{E_{1}}$ \\ 
				\hline
		\end{tabular}}
		\caption{Images of $\chi^!_i$'s under $\Psi$ for $G=\dfrac{1}{6}(1,1,4)$ and $\widetilde{G}=\dfrac{1}{7}(1,1,5)$}
		\label{A1TildeTable}
	\end{center}
\end{table}}

 \end{ex}

\begin{ex}
We continue with Example \ref{1212}. Recall that $G=\mathbb{Z}/15\mathbb{Z}$  with $\nu_1=\dfrac{1}{15}(1,2,12)$. See Figure \ref{JuniorSimplex1212} for the partition of junior simplex into basic triangles corresponding to $G\operatorname{-}\mbox{Hilb}(\mathbb{C}^3)$ and marking of edges with characters. This time the McKay quiver $Q(G,\mathbb{C}^3)$ contains a $Q'=\widetilde{A}_2$  subquiver supported on the vertices enumerated by characters $\chi_1, \chi_2$ and $\chi_3$:

  {\renewcommand{\arraystretch}{1.4}	\begin{table}[ht]
 		\begin{center}
 			\hspace{-3in}		\resizebox{5cm}{!}{
 		\begin{tabular}{ |c|c|} 
 					\hline
 					$\chi$& $H^{-1}(\Psi(\chi^!))$ \\ 
 					\hline
 					$\chi_1$& $\mathcal{L}^{-1}_{\chi_1}(-E_{z5})\otimes\mathcal{O}_{E_{1234}}$ \\ 
 					
 					$\chi_2$& $\mathcal{L}^{-1}_{\chi_2}(-E_{z8})\otimes\mathcal{O}_{E_{67}}$ \\ 
 					
 					$\chi_3$& $\mathcal{L}^{-1}_{\chi_3}(-E_{xy})\otimes\mathcal{O}_{E_{47}}$ \\ 
 					\hline
 			\end{tabular}}
 		\vspace{-0.7in}
 		\[\begin{tikzcd}
 			& & &  & &	\mathsmaller{1} & \mathsmaller{2} & \mathsmaller{3}.
 			\arrow[from=1-6, to=1-7]
 			\arrow[from=1-7, to=1-8]
 			\arrow[bend left=40,  from=1-6, to=1-8]
 		\end{tikzcd}\]
 			\caption{Images of $ \chi^!_{1,2,3}$ under $\Psi$ for $G=\dfrac{1}{15}(1,2,12)$}
 		\end{center}
 \end{table}}
\end{ex}

\begin{thm}
Let $r=k+s+1$ with $s\equiv 0 ~(\mbox{mod } k)$ and $s\equiv 0 ~(\mbox{mod } k+1)$. Set $t:=\dfrac{r}{k+1}$ and consider the sequence
\begin{align*}
	&\gamma_{0,t}=1\\
 &\gamma_{n,t}:=n t-n(n-1).
\end{align*}

\begin{enumerate}
	\item If a character $\chi_c$ appears marking an edge, then $c$ satisfies at least one of the following conditions:
	
	\begin{itemize}
		\item $1\leq c\leq k+1$
		\item $c$ is divisible by $k$
		\item $c$ is divisible by $k+1$
	\end{itemize}
	
	\item The images of the $k+1$ skyscraper sheaves corresponding to the first $k+1$ nontrivial characters of $G$ are as presented below.
	
	{\renewcommand{\arraystretch}{1.4}	\begin{table}[ht]
			\begin{center}
				\resizebox{9cm}{!}{
					\begin{tabular}{ |c|c|} 
						\hline
						$\chi$& $H^{-1}(\Psi(\chi^!))$ \\ 
						\hline
						$\chi_1$& $\mathcal{L}^{-1}_{\chi_1}(-E_{z\gamma_{k,t}})\otimes\mathcal{O}_{E_{\gamma_{k-1,t}+2\gamma_{k-1,t}+3\hdots \gamma_{k,t}-1}}$ \\ 
						\hline
						$\chi_2$& $\mathcal{L}^{-1}_{\chi_2}(-E_{z\gamma_{k-1,t}})\otimes\mathcal{O}_{E_{\gamma_{k-2,t}+2\gamma_{k-2,t}+3\hdots \gamma_{k-2,t}-1}}$ \\ 
						\hline
						$\hdots$ & $\hdots$ \\
						\hline
						$\chi_{k}$& $\mathcal{L}^{-1}_{\chi_k}(-E_{z\gamma_{1,t}})\otimes\mathcal{O}_{E_{123\hdots \gamma_{1,t}-1}}$ \\ 
						\hline
						$\chi_{k+1}$& $\mathcal{L}^{-1}_{\chi_{k+1}}(-E_{xy})\otimes\mathcal{O}_{E_{\gamma_{1,t}-1\gamma_{2,t}-1\hdots \gamma_{k,t}-1}}$ \\ 
						\hline
				\end{tabular}}
				\caption{Images of $ \chi^!_{1,2,\hdots,k+1}$ under $\Psi$ for $G=\dfrac{1}{r}(1,k,s)$}
			\end{center}
		\label{CharTable}
	\end{table}}
\end{enumerate}

\label{CohMin1}
\end{thm}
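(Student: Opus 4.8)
The plan is to base everything on an explicit determination of the toric fan $\Sigma$ of $Y=G\operatorname{-}\mathrm{Hilb}(\mathbb{C}^3)$ together with its Reid's-recipe edge markings; granting these, part (2) becomes a direct invocation of Theorem \ref{CCLthm}. First I would put the arithmetic in a transparent form. Since $\gcd(k,k+1)=1$, the hypotheses $k\mid s$ and $(k+1)\mid s$ force $k(k+1)\mid s$, so $s=k(k+1)\ell$, $r=1+k+s=(k+1)(1+k\ell)$ and $t=r/(k+1)=1+k\ell$. Taking the action as $\mathrm{diag}(\varepsilon,\varepsilon^k,\varepsilon^s)$, the exceptional divisors of $Y$ are the junior (age-one) lattice points $v_j=\tfrac1r(\overline{j},\overline{jk},\overline{js})$ of $\triangle$, where $\overline{\,\cdot\,}$ denotes reduction modulo $r$ into $[0,r)$. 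A direct computation shows that for $1\le j\le t$ one has $v_j=\tfrac1r(j,jk,r-j(k+1))$; these points are collinear along the primitive direction $(1,k,-(k+1))$, and $v_t=\tfrac1{k+1}(1,k,0)$ lands on the edge $e_xe_y$. This is the first row of exceptional divisors.

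The combinatorial heart is to show that all junior points fall into $k$ parallel rows along this same direction, of lengths $t,t-2,\dots,t-2(k-1)$, and that $\gamma_{n,t}=nt-n(n-1)$ records the running index of the divisor at which the $n$-th row reaches the boundary of $\triangle$ (so $\gamma_{1,t}=t$ recovers $v_t$). To pin down $\Sigma$ I would run the three-step algorithm recalled in Section 2. After reduction, the transverse singularity at $e_y$ is $\tfrac1t(1,t-1)$, whose Hirzebruch--Jung sequence is $[2:\cdots:2]$ with $t-1$ twos by Example \ref{Hirz}, while those at $e_x$ and $e_z$ reduce as in Example \ref{1212}; the extension/defeat rule of Step 2 then selects the surviving segments and Step 3 records the regular triangles to be tessellated. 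I expect Step 2 to be the main obstacle: one must prove, uniformly in $k$ and $\ell$ (the small cases such as $\ell=1$ being treated by hand), exactly where rival lines terminate and which triangles of side greater than $1$ occur. My intention is to argue by induction on the rows, feeding the collinearity above and the constant label $2$ issuing from $e_y$ into the defeat rule, so that the qualitative picture of Example \ref{1212} is seen to persist.

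With $\Sigma$ and its edges described, part (1) is a congruence count. By Reid's recipe the perpendicular ray in $M$ to an internal edge $(e,f)$ is spanned by the exponent vector of a ratio $m_1/m_2$ of coprime monomials, and the edge is labelled by their common $G$-character $\chi_c$; writing $m_1=x^ay^bz^d$, this means $c\equiv a+bk+ds \pmod r$. I would run this over the three families of edges produced above — those internal to a row, those joining consecutive rows, and those meeting $e_x,e_y,e_z$ — and verify in each case that $c$ reduces either to a value in $\{1,\dots,k+1\}$, to a multiple of $k$, or to a multiple of $k+1$. The divisibility is forced because the coordinates of the row endpoints are themselves governed by $k$ and $k+1$; the outcome agrees with the labels $\chi_1,\dots,\chi_4,\chi_6,\chi_9,\chi_{10},\chi_{12}$ appearing in Figure \ref{JuniorSimplex1212}.

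Finally, for part (2) I would read off, for each $1\le i\le k+1$, the connected set of edges carrying the character $\chi_i$. The computation of the previous step shows that $\chi_1,\dots,\chi_k$ are distributed over the interiors of the $k$ rows, in the order recorded by the table (with $\chi_k$ on the first row and $\chi_1$ on the $k$-th), each one sweeping out a chain of divisors from a boundary divisor $E_z$ to the row-closing divisor $E_{\gamma_{\bullet,t}}$, while $\chi_{k+1}$ marks the transversal chain threading the rows through $E_{\gamma_{1,t}-1},\dots,E_{\gamma_{k,t}-1}$ and terminating along $E_{xy}$. Each of these is of the ``chain of divisors'' type in the table of Theorem \ref{CCLthm}, so $\Psi(\chi_i^!)$ is concentrated in degree $-1$ with $H^{-1}(\Psi(\chi_i^!))=\mathcal{L}^{-1}_{\chi_i}(-E-F)\otimes\mathcal{O}_Z$ for the two endpoint divisors $E,F$ and the chain $Z$. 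Substituting the endpoints and index sets found above reproduces the entries of the table; the only remaining care is the bookkeeping of the twist $\mathcal{L}^{-1}_{\chi_i}(-E-F)$ and of the index set of $Z$, which I extract directly from the endpoints of each chain.
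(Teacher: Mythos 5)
Your proposal follows the same route as the paper's proof: explicit Hirzebruch--Jung data at the three corners of the junior simplex, the three-step triangulation algorithm to determine $\Sigma$, Reid's recipe to mark the internal edges, and then Theorem \ref{CCLthm} (the ``chain of divisors'' row) to convert the markings into the table of part (2). Your preliminary bookkeeping is also consistent with what the paper finds: the junior points organize into $k$ rows of lengths $t,t-2,\dots,t-2(k-1)$ parallel to the direction $(1,k,-(k+1))$, and $\gamma_{n,t}$ indexes the divisor closing the $n$-th row.

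However, there is a genuine gap, and it sits exactly where you place it: you never actually determine the triangulation. You call Step 2 of the algorithm ``the main obstacle'' and say your ``intention is to argue by induction on the rows,'' but no such induction is carried out, and everything downstream (the three families of parallel edges, the chains, both parts of the theorem) depends on it. For comparison, the paper needs no induction: having computed $HJ_z=[\ell+2:2:\cdots:2]$ with $k-1$ twos (where $\ell=s/k$), $HJ_y=[2:\cdots:2]$ with $t-1$ twos, and $HJ_x=[k+1:2:\cdots:2]$, it settles the defeat rule by direct label inequalities --- every label-$2$ segment (all those from $e_y$, and all from $e_z$ except $L^{e_z}_1$) terminates at the first lattice point it meets; $L^{e_z}_1$, whose label $\ell+2$ is large enough to survive all of its encounters, crosses the simplex to the boundary edge $[e_x,e_y]$; and the label-$(k+1)$ segment from $e_x$ extends precisely until $E_1$. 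A second, smaller shortfall: part (1) is a statement about \emph{all} internal edges, not only those carrying $\chi_1,\dots,\chi_{k+1}$. The paper handles the remaining edges by the parallelism observation that translating an edge one lattice unit within each of the three families shifts its label by $-k$ or by $-(k+1)$, which is what forces every occurring index $c$ to lie in $[1,k+1]$ or among the multiples of $k$ or of $k+1$; your ``congruence count'' gestures at the same idea but, as written, is again only a plan rather than an argument.
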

\begin{proof}
	
We follow Reid's recipe (see Section $2$).	Let $\ell=\dfrac{s}{k}$ and compute the Hirzebruch-Jung fractions and sequences at  vertices of $\triangle$ (see Example \ref{Hirz}):
	
		
  \begin{align*}
  	&\dfrac{r}{k}=\ell+2-\dfrac{k-1}{k},~~HJ_z=[\ell+2:\underbrace{2:2:\hdots:2}_{k-1}];\\
  	&\dfrac{1}{r}(s,1)=\dfrac{1}{r/(k+1)}\left(\dfrac{s}{k+1},1\right)=\dfrac{1}{t}(t-1,1)=\dfrac{1}{t}(1,t-1),~~HJ_y=[\underbrace{2:2:\hdots:2}_{t-1}];\\
  	&\dfrac{1}{r}(k,s)=\dfrac{1}{r}\left(1,\dfrac{s}{k}\right)=\dfrac{1}{\dfrac{r}{k+1}}\left(1,\dfrac{s}{k(k+1)}\right)=\dfrac{1}{t}\left(1,\dfrac{t-1}{k}\right),~~HJ_x=[k+1:\underbrace{2:2:\hdots:2}_{(t-1)/k-1}];\\
  \end{align*}
(we have used that $\dfrac{t}{(t-1)/k}=k+1-\dfrac{(t-1)/k-1}{(t-1)/k}$ to evaluate $HJ_X$).
	
	As  $HJ_x=[k+1:\underbrace{2:2:\hdots:2}_{(t-1)/k-1}]$, the line segment connecting $e_x$ and $E_{\gamma_{{k-1},t-1}+1}$ (or $E_{13}$ on Figure \ref{JuniorSimplex1324}) continues until $E_1$. As $\ell+2=s/k+2>2=k+1-(k-1)$ and $s/k+2-t/k-(t-1)=3>2$, 
	it follows that each of the line segments emerging from $e_y$ and each of the line segments emanating from $e_z$ except $L^{e_z}_1$ does not continue beyond the first lattice point on it (see Step $2$ of Algorithm \ref{Algo}). On the other hand $L^{e_z}_1$ defeats all the line segments on its way to $E_{t+1}$, which belongs to the edge $[e_x,e_y]$ of $\triangle$. The remaining edges of triangulation of $\triangle$ (see Step $3$ of the algorithm) come in three families of parallel lines with slopes equal to those of
	
	\begin{itemize}
		\item $[e_z,E_1]$
		\item $[e_x, e_y]$ 
		\item $[e_x, E_{(t+1)/2}]$.
	\end{itemize}
	
	Next we verify that the edges of triangulation of $\triangle$ are marked as in the statement of the theorem. Let $1\leq i\leq k$ and consider the edge $(e_z,E_{{\gamma_{k-i},t}+1})$. Denote $\alpha_i:=\left(1+\dfrac{(r-1)(i-1)}{k}\right)$, then
	
	$\begin{cases}
	c=0\\
	\alpha_i a+(k-i+1)b=0,\\
\end{cases}$

hence, $m_1=x^{k-i}$ and $m_2=y^{\alpha_i}$ with $\chi=\chi_{k-i+1}$ (the solution is $(a,b,c)=(k-i+1,-\alpha_i,0)$).

For each $1\leq i\leq k$ the vertices $E_{{\gamma_{k-i},t}+1},E_{{\gamma_{k-i},t}+2},\hdots,E_{{\gamma_{k-i+1},t}}$ lie on a line  $\ell_i$, moreover, all these lines are parallel and have direction vector $\mathbf{v}=(1,k,r-k-1)$ (see Figure \ref{JuniorSimplex1324}). As the solution $(a,b,c)=(k-i+1,-\alpha_i,0)$ satisfies the equation $\mathbf{v}\cdot(a,b,c)\equiv 0~(\mbox{mod}~ r)$, it follows that all intervals on the line $\ell_i$ are labeled by the character $\chi_{k-i+1}$.

	Now we check that the edge $(e_y,E_{t-1})$ is marked with $\chi_{k+1}$. As $E_{t-1}=\dfrac{1}{r}(t-1,r-k-t,k+1)$, we get the system of equations

$\begin{cases}
	b=0\\
	(t-1)a+(k+1)c=0,\\
\end{cases}$

hence, $m_1=x^{k+1}$ and $m_2=z^{t-1}$ with $\chi=\chi_{k+1}$ (the solution is $(a,b,c)=(k+1,0,1-t)$). The vertices $E_{t-1}, E_{\gamma_{2,t}-1},\hdots,E_{\gamma_{k,t}-1}$ lie on the same line, with the vector connecting any two subsequent vertices being $(t,-t,0)$. As $(k+1)t=(k+1)\dfrac{r}{k+1}=r$, we (inductively) get that the solution $(a,b,c)=(k+1,0,1-t)$ satisfies the equations imposed by $E_{\gamma_{g,t}-1}$ and $E_{\gamma_{g+1,t}-1}$, so each edge $(E_{\gamma_{g,t}-1}, E_{\gamma_{g+1,t}-1})$ is marked by $\chi_{k+1}$. A straightforward computation shows that the edge $(e_x,E_{\gamma_{k,t}})$ is marked with $\chi_{k+1}$ as well. The remaining 'labeling assertions' can be checked similarly using the parallelism of corresponding lines. In particular, notice that
\begin{itemize}
	\item a move by one unit to the right in the family of lines with slope equal to the one of $[e_z,E_1]$  results  in decrease of the corresponding label by $k$; 
	\item a move by one unit to the right in the family of lines with slope equal to the one of $[e_x,e_y]$  results  in decrease of the corresponding label by $k+1$; 
	\item a move by one unit to the right in the family of lines with slope equal to the one of $[e_x,E_{(t+1)/2}]$  results  in decrease of the corresponding label by $k$ to the left and by $k+1$ to the right of the line segment $[e_x,E_{(t+1)/2}]$.  
\end{itemize}

\end{proof}	

\begin{figure}[htbp!]
	\begin{center}
		
		\begin{tikzpicture}[scale=1.1]
			\draw (0,0) -- (15,0) -- (7.5,13) -- (0,0) -- cycle;
			
			\coordinate (Z) at (0,0);
			\coordinate (Y) at (15,0);
			\coordinate (X) at (7.5,13);
			
			\coordinate [label=above left: $E_1$] (E1) at (52.5/28,13/28);
			\coordinate [label=above left: $E_2$] (E2) at (2*52.5/28,2*13/28) ;
			\coordinate [label=above left: $E_3$] (E3) at (3*52.5/28,3*13/28) ;
			\coordinate [label=above left: $E_4$] (E4) at (4*52.5/28,4*13/28) ;
			\coordinate [label=above left: $E_5~$] (E5) at (5*52.5/28,5*13/28) ;
			\coordinate [label=above left: $E_6~$] (E6) at (6*52.5/28,6*13/28) ;
			\coordinate [label=above left: $E_7~$] (E7) at (7*52.5/28,7*13/28) ;
			
			\node at (7*52.5/28+0.3,7*13/28-0.1) {$\ell_1$};
			
			\coordinate [label=above left: $E_8$] (E8) at (105/28,130/28) ;
			\coordinate [label=above left: $E_9$] (E9) at (157.5/28,143/28);
			\coordinate [label=above left: $E_{10}$] (E10) at (210/28,156/28) ;
			\coordinate [label=above left: $E_{11}~$] (E11) at (262.5/28,169/28) ;
			\coordinate [label=above left: $E_{12}~$] (E12) at (315/28,182/28) ;
			
			\node at (315/28+0.3,182/28-0.1) {$\ell_2$};
			
			\coordinate [label=above left : $E_{13}$] (E13) at (157.5/28,247/28) ;
			\coordinate [label=above left: $E_{14}$] (E14) at (210/28,260/28) ;
			\coordinate [label=above left: $E_{15}~$] (E15) at (262.5/28,273/28) ;
			
			\node at (262.5/28+0.3,273/28-0.1) {$\ell_3$};
			
			\node at (E1) {$\bullet$};
			\node at (E2) {$\bullet$};
			\node at (E3) {$\bullet$};
			\node at (E4) {$\bullet$};
			\node at (E5) {$\bullet$};
			\node at (E6) {$\bullet$};
			\node at (E7) {$\bullet$};
			\node at (E8) {$\bullet$};
			\node at (E9) {$\bullet$};
			\node at (E10) {$\bullet$};
			\node at (E11) {$\bullet$};
			\node at (E12) {$\bullet$};
			\node at (E13) {$\bullet$};
			\node at (E14) {$\bullet$};
			\node at (E15) {$\bullet$};
			
			\draw (X)  edge node[right]{$\mathsmaller{\chi_{24}}$} (E13);
			\draw (E8)  edge node[right]{$\mathsmaller{\chi_{24}}$} (E13);
			\draw (E8)  edge node[right]{$\mathsmaller{\chi_{24}}$} (E1);

			\draw (X) edge node[right]{$\mathsmaller{\chi_{4}}$} (E14);
			\draw (E10) edge node[right]{$\mathsmaller{\chi_{12}}$} (E14);
			\draw (E4) edge node[right]{$\mathsmaller{\chi_{12}}$} (E10);
			
			\draw (Z) edge node[above]{$\mathsmaller{\chi_{3}}$} (E1);
			\draw (E1) edge node[above]{$\mathsmaller{\chi_{3}}$} (E2);
			\draw (E2) edge node[above]{$\mathsmaller{\chi_{3}}$} (E3);
			\draw (E3) edge node[above]{$\mathsmaller{\chi_{3}}$} (E4);
			\draw (E4) edge node[above]{$\mathsmaller{\chi_{3}}$} (E5);
			\draw (E5) edge node[above]{$\mathsmaller{\chi_{3}}$} (E6);
			\draw (E6) edge node[above]{$\mathsmaller{\chi_{3}}$} (E7);
			
			\draw (Z)  edge node[above]{$\mathsmaller{\chi_{2}}$} (E8);
			\draw (Z)  edge node[above]{$\mathsmaller{\chi_{1}}$} (E13);
			
			\draw (Y) edge node[above]{$\mathsmaller{\chi_{24}}$} (E1);
			\draw (Y) edge node[above]{$\mathsmaller{\chi_{20}}$} (E2);
			\draw (Y) edge node[above]{$\mathsmaller{\chi_{16}}$} (E3);
			\draw (Y) edge node[above]{$\mathsmaller{\chi_{12}}$} (E4);
			\draw (Y) edge node[above]{$\mathsmaller{\chi_{8}}$} (E5);
			\draw (Y) edge node[above]{$\mathsmaller{\chi_{4}}$} (E6);
			
			\draw[dashed] (E2) edge node[left]{$\mathsmaller{\chi_{20}}$} (E8);
			\draw[dashed] (E3) edge node[left]{$\mathsmaller{\chi_{16}}$} (E9);
			\draw[dashed] (E5) edge node[right]{$\mathsmaller{\chi_{9}}$} (E11);
			\draw[dashed] (E11)  edge node[right]{$\mathsmaller{\chi_{9}}$} (E15);
			\draw[dashed] (E6)  edge node[right]{$\mathsmaller{\chi_{6}}$} (E12);
			
			\draw[dashed] (E13)  edge node[left]{$\mathsmaller{\chi_{16}}$} (E9);

			\draw[dashed] (E8)  edge node[above]{$\mathsmaller{\chi_{2}}$} (E9);
			\draw[dashed] (E9)  edge node[above]{$\mathsmaller{\chi_{2}}$} (E10);
			\draw[dashed] (E10)  edge node[above]{$\mathsmaller{\chi_{2}}$} (E11);
			\draw[dashed] (E11)  edge node[above]{$\mathsmaller{\chi_{2}}$} (E12);
			
			\draw[dashed] (E13) edge node[above]{$\mathsmaller{\chi_{1}}$} (E14);
			\draw[dashed] (E14) edge node[above]{$\mathsmaller{\chi_{1}}$} (E15);
			
			\draw[dashed] (E2) edge node[left]{$\mathsmaller{\chi_{21}}$} (E9);
			\draw[dashed] (E14) edge node[left]{$\mathsmaller{\chi_{21}}$} (E9);

			\draw[dashed] (E6) edge node[right]{$\mathsmaller{\chi_{4}}$} (E11);
			\draw[dashed] (E11) edge node[right]{$\mathsmaller{\chi_{4}}$} (E14);
			
			\draw[dashed] (E3) edge node[left]{$\mathsmaller{\chi_{18}}$}  (E10);
			\draw[dashed] (E5) edge node[right]{$\mathsmaller{\chi_{8}}$} (E10);

			\node  at (-0.2,-0.2) {$e_z$};
			\node  at (15.37,-0.2) {$e_y$};
			\node  at (7.5,13.3) {$e_x$};	
		\end{tikzpicture}
		\caption{$\Sigma$ fans and character marking for $G=\dfrac{1}{28}(1,3,24)$}
		\label{JuniorSimplex1324}
	\end{center}
\end{figure}
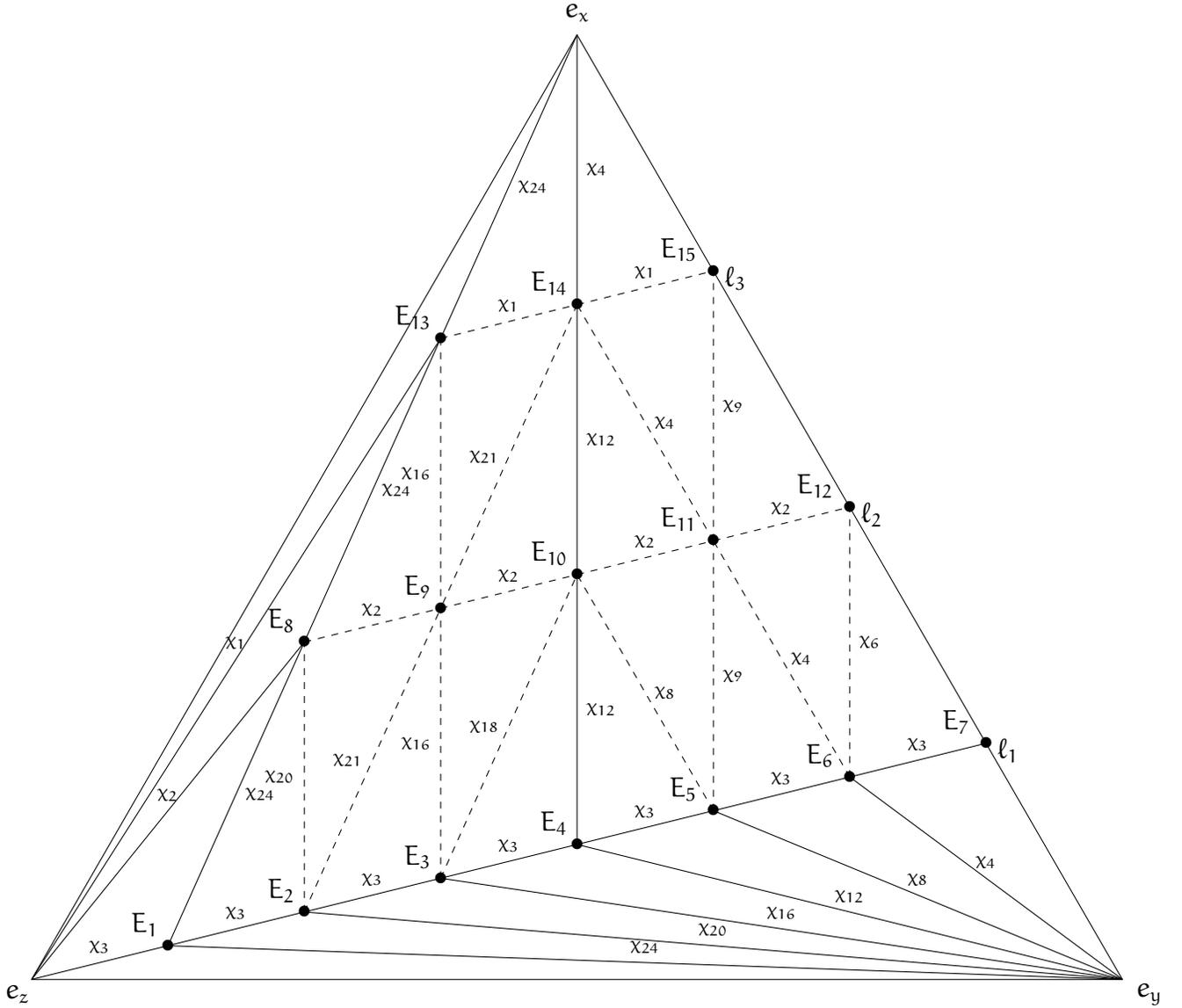
\begin{cor}
	Let $k$ and $s$ satisfy the assumptions of Theorem \ref{CohMin1} with $k$ fixed. Denote the set of characters $\chi$ with  $H^{0}(\Psi(\chi^!))\neq 0$ by $\mathfrak{H}_0$. Then $\lim\limits_{s\rightarrow \infty}\dfrac{|\mathfrak{H}_0|}{r-1}\geq \dfrac{k-1}{k+1}$. In particular, $\lim\limits_{\substack{s\rightarrow \infty\\k\rightarrow \infty}}\dfrac{|\mathfrak{H}_0|}{r-1}=1$.
\label{LimRatio}
\end{cor}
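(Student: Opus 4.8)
The plan is to convert the statement into a count of characters via Reid's recipe and Theorem~\ref{CCLthm}, and then to estimate that count directly from the triangulation built in the proof of Theorem~\ref{CohMin1}. By Theorem~\ref{CCLthm}, for a nontrivial $\chi$ the group $H^{0}(\Psi(\chi^!))$ is nonzero exactly in the two rows where $\chi$ marks a single divisor or a single curve, and it vanishes in the rows where $\chi$ marks a chain of divisors or three chains of divisors. In Reid's recipe $\chi$ marks a single divisor iff it labels one vertex and (being isolated) no edge of $\Sigma$, a single curve iff it labels exactly one edge, and a chain (resp.\ three chains) iff it labels at least two edges. Since the table is exhaustive for nontrivial characters, $\chi\in\mathfrak{H}_0$ iff $\chi$ labels at most one edge, so setting
$$
N:=\#\{\,\chi\neq\chi_0 \ :\ \chi \text{ labels at least two edges of } \Sigma\,\},
$$
we get $|\mathfrak{H}_0|=(r-1)-N$, hence $\dfrac{|\mathfrak{H}_0|}{r-1}=1-\dfrac{N}{r-1}$. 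It therefore suffices to bound $N$ from above.

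I would bound $N$ using the description, at the end of the proof of Theorem~\ref{CohMin1}, of the edges of $\Sigma$ as three families of parallel segments with the directions of $[e_z,E_1]$, $[e_x,e_y]$ and $[e_x,E_{(t+1)/2}]$. Every character counted by $N$ labels a monochromatic broken line lying in a single family, so $N$ is at most the total number of distinct characters occurring across the three families (overlaps only help the bound, so no disjointness is needed). The first family consists precisely of the $k$ long diagonals $\ell_1,\dots,\ell_k$, labelled by $\chi_k,\dots,\chi_1$, contributing $k$ characters. For the second family the rule ``a unit step to the right lowers the label by $k+1$'' forces its monochromatic lines to carry labels that are multiples of $k+1$; since $(k+1)t=r$, there are at most $t-1$ such characters. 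For the third family the analogous transverse count (its parallel interior lines meet a fixed edge of $\triangle$ in at most $t-1$ interior lattice points) again yields at most $t-1$ characters. Thus $N\le k+2(t-1)$.

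Passing to the limit finishes the argument. From $t=r/(k+1)$ and $s=(k+1)(t-1)$ we have $r-1=(k+1)t-1$, and $t\to\infty$ as $s\to\infty$ with $k$ fixed, so
$$
\frac{N}{r-1}\ \le\ \frac{k+2(t-1)}{(k+1)t-1}\ \xrightarrow[t\to\infty]{}\ \frac{2}{k+1}.
$$
Therefore $\lim_{s\to\infty}\dfrac{|\mathfrak{H}_0|}{r-1}\ge 1-\dfrac{2}{k+1}=\dfrac{k-1}{k+1}$, which is the first claim. For the second claim, the same inequality holds for every admissible $k$, while $\frac{k-1}{k+1}\to 1$ as $k\to\infty$; since $\frac{|\mathfrak{H}_0|}{r-1}\le 1$ always, the double limit is squeezed to $1$.

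The main obstacle is the middle step: upgrading the qualitative ``three families'' picture to the honest inequality $N\le k+2(t-1)$. The delicate point is the third family, whose labels change by $k$ on one side of $[e_x,E_{(t+1)/2}]$ and by $k+1$ on the other; one must verify that its parallel interior lines number at most $t-1$ and that none of the auxiliary edges created in Step~$3$ of the triangulation generates an unexpected additional chain. Here the labelling rules together with the explicit coordinates $\gamma_{n,t}=nt-n(n-1)$ from Theorem~\ref{CohMin1} are exactly what makes the bookkeeping effective; I note that only the displayed upper bound is required, and it need not be tight (a finer count suggests the third family contributes of order $t/k$ rather than $t$), consistently with the inequality $\ge\frac{k-1}{k+1}$ asserted in the statement.
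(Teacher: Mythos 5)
Your reduction $|\mathfrak{H}_0|=(r-1)-N$ (via Theorem \ref{CCLthm}) and your final limit computation are fine, but the central inequality $N\le k+2(t-1)$ is not established, because the counting principle you use to derive it is false. You assume that every chain-character labels a monochromatic broken line inside one of the three families of directions $[e_z,E_1]$, $[e_x,e_y]$, $[e_x,E_{(t+1)/2}]$, and that these families account for all edges carrying chains. Neither holds. Chains turn corners (the $\chi_{k+1}$-chain runs from $e_y$ to $e_x$ with its first and last edges not parallel to its middle ones), lines need not be monochromatic (in Figure \ref{JuniorSimplex1324} the vertical line through $E_4,E_{10},E_{14},e_x$ carries both $\chi_{12}$ and $\chi_4$), and, most seriously, Step $3$ of the algorithm also produces a \emph{fourth} family of tessellation edges, parallel to the long segment $[e_x,E_1]$ coming from $HJ_x=[k+1:2:\hdots:2]$. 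Concretely, for $G=\frac{1}{28}(1,3,24)$ (the paper's Figure \ref{JuniorSimplex1324}), the character $\chi_{21}$ marks exactly the two edges $(E_2,E_9)$ and $(E_9,E_{14})$, both in this fourth direction, and $\chi_{24}$ marks the chain $(e_y,E_1),(E_1,E_8),(E_8,E_{13}),(E_{13},e_x)$; neither character labels a single edge in any of your three families. In that example one reads off $N=11$, while your three families carry only ten distinct characters ($\chi_1,\chi_2,\chi_3$; $\chi_4,\chi_8$; $\chi_4,\chi_6,\chi_9,\chi_{12},\chi_{16},\chi_{20}$), so the inequality ``$N$ is at most the number of characters occurring across the three families'' is simply violated. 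Your numerical bound $k+2(t-1)=15$ still happens to majorize $N=11$ there, but your argument does not prove it, and any honest repair must also control the characters carried by the fourth family and by the corner edges emanating from $e_x,e_y,e_z$.

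The clean way to do that bookkeeping is exactly what the paper does instead, bypassing the geometry of chains altogether: part $(1)$ of Theorem \ref{CohMin1} says that \emph{any} edge-marking character $\chi_c$ (in whatever direction, including the fourth family and the corner edges) satisfies $1\le c\le k+1$, or $k\mid c$, or $(k+1)\mid c$. Combined with Theorem \ref{CCLthm} (a character marking no edge marks a single divisor, hence lies in $\mathfrak{H}_0$), inclusion--exclusion gives
\begin{equation*}
|\mathfrak{H}_0|\ \ge\ r-(k+1)-\frac{r-1}{k}-\frac{r-1}{k+1}+\frac{r-1}{k(k+1)}\ =\ \frac{(k-1)s-2k}{k+1},
\end{equation*}
and the density $\frac1k+\frac1{k+1}-\frac1{k(k+1)}=\frac{2}{k+1}$ of the arithmetic union is the true source of the $\frac{2}{k+1}$ you obtained. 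If you keep your two-step structure, replace your family count by this arithmetic bound on $N$; the rest of your write-up then goes through.
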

\begin{proof}
	It follows from $(1)$ in Theorem \ref{CohMin1} combined with Theorem \ref{CCLthm} that 
	
	\begin{align*}
	&|\mathfrak{H}_0|\geq r-(k+1)-\dfrac{r-1}{k}-\dfrac{r-1}{k+1}+\dfrac{r-1}{k(k+1)}=\\
	&=\dfrac{sk(k+1)-(k+1)(s+k)-k(s+k)+s+k}{k(k+1)}=\dfrac{(k^2-k)s-2k^2}{k(k+1)}=\dfrac{(k-1)s-2k}{k+1}.\hspace{2in}
	\end{align*}
	Hence, $\lim\limits_{s\rightarrow \infty}\dfrac{|\mathfrak{H}_0|}{r-1}\geq\lim\limits_{s\rightarrow \infty}\dfrac{(k-1)s-2k}{(k+1)(k+s)}=\dfrac{k-1}{k+1}.$
\end{proof}

\begin{rmk}
	For $s\gg k\gg 0$ the cohomology of $\Psi(\chi^!)$ tend to concentrate in degree $0$.
\end{rmk}
\begin{thm}
	Let $r=k+s+1$ with $k$ and $s$ satisfying the assumptions of Theorem \ref{CohMin1} and $s \gg 0$.
	\begin{enumerate}
		\item Let  $\mathfrak{n}_-\subset \widehat{\mathfrak{sl}}_{k+1}(\mathbb{C})$ be a nilpotent subalgebra and $Q'\subset Q(G,\mathbb{C}^3)$ the subquiver supported on  vertices $Q'_0=\{1,2,\hdots,k+1\}$. There is an isomorphism of algebras $$U(\mathfrak{n}_-)\overset{\Theta}{\rightarrow}\mathcal{H}\langle \{\Psi(\chi_i^!)\}_{i\in Q'_0}\rangle.$$	
		\begin{figure}[htbp!]
			\[\begin{tikzcd}
				\mathsmaller{1} & \mathsmaller{2} & \mathsmaller{3} & \mathsmaller{4} & \mathsmaller{\hdots} & \mathsmaller{{k+1}}
				\arrow[from=1-1, to=1-2]
				\arrow[from=1-2, to=1-3]
				\arrow[from=1-3, to=1-4]
				\arrow[from=1-4, to=1-5]
				\arrow[from=1-5, to=1-6]
				\arrow[bend left=40, from=1-1, to=1-6]
			\end{tikzcd}\]\caption{$Q'=\widetilde{A}_{k}\subset Q(G,\mathbb{C}^3)$}
		\end{figure}
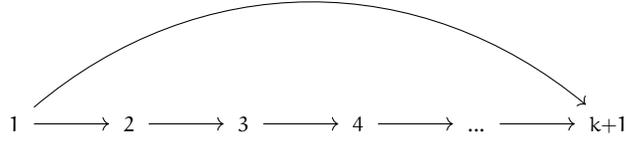
		\item Assume, in addition, that $k=2q+1\geq 5$ is odd, $5\leq n \leq k+1$ and  $\alpha$ satisfies the properties: 
		\begin{itemize}
			\item $\alpha\equiv q ~(\mbox{mod } k)$, $\alpha\equiv -2 ~(\mbox{mod } k+1)$;
			\item $\alpha+k(n-3)+k+1\leq r-1 \Leftrightarrow \alpha\leq s-k^2+3k-1$.
		\end{itemize}
	Let $\mathfrak{n}_-\subset \widehat{\mathfrak{so}}_{2n}(\mathbb{C})$ with $5\leq n \leq k+1$ be a nilpotent subalgebra and $Q'\subset Q(G,\mathbb{C}^3)$  the subquiver supported on  vertices $Q'_0=\{\alpha-k-1, \alpha, \alpha+1, \alpha+k, \alpha+2k, \hdots, \alpha+k(n-3),    \alpha+k(n-3)-1, \alpha+k(n-3)+k+1\}$. There is an isomorphism of algebras $$U(\mathfrak{n}_-)\overset{\Theta}{\rightarrow}\mathcal{H}\langle \{\Psi(\chi_i^!)\}_{i\in Q'_0}\rangle.$$

	\begin{figure}[htbp!]
		\[\begin{tikzcd}
			{\mathsmaller{\alpha+1}} && {} &&& {\mathsmaller{\alpha+k(n-3)-1}} \\
			& \mathsmaller{\alpha} & {\mathsmaller{\alpha+k}} & \mathsmaller{\hdots} & {\mathsmaller{\alpha+k(n-3)}} \\
			\mathsmaller{{\alpha-k-1}} && {} &&& \mathsmaller{{\alpha+kn-2k+1}}
			\arrow[from=2-2, to=2-3]
			\arrow[from=2-2, to=1-1]
			\arrow[from=2-2, to=3-1]
			\arrow[from=2-3, to=2-4]
			\arrow[from=2-4, to=2-5]
			\arrow[from=1-6, to=2-5]
			\arrow[from=3-6, to=2-5]
		\end{tikzcd}\]
		\caption{$Q'=\widetilde{D}_{n+1}\subset Q(G,\mathbb{C}^3)$}
	\end{figure}
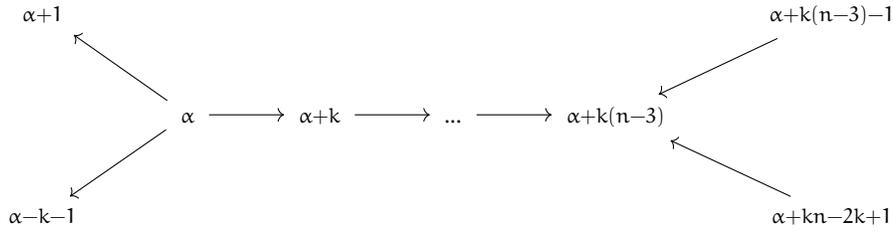
\item Assume that $k>8$ and let $\mathfrak{g}$ be the Lie algebra of type $E_6$ with  $\mathfrak{n}_-\subset \widehat{\mathfrak{g}}$  a nilpotent subalgebra and $Q'\subset Q(G,\mathbb{C}^3)$ the subquiver supported on vertices $Q'_0=\{k+6,2k+6,3k+4,3k+5,3k+6,4k+7,5k+8\}$. There is an isomorphism of algebras $$U(\mathfrak{n}_-)\overset{\Theta}{\rightarrow}\mathcal{H}\langle \{\Psi(\chi_i^!)\}_{i\in Q'_0}\rangle.$$

\vspace{-0.1in}	
\begin{figure}[htbp!]
\[\begin{tikzcd}
	\mathsmaller{{3k+4}} & 	\mathsmaller{{3k+5}} & 	\mathsmaller{{3k+6}} & 	\mathsmaller{{4k+7}} & 	\mathsmaller{{5k+8}} \\
	&& 	\mathsmaller{{2k+6}} \\
	&& 	\mathsmaller{{k+6}}
	\arrow[from=1-1, to=1-2]
	\arrow[from=2-3, to=1-3]
	\arrow[from=1-2, to=1-3]
	\arrow[from=1-4, to=1-3]
	\arrow[from=1-5, to=1-4]
	\arrow[from=3-3, to=2-3]
\end{tikzcd}\]
	\caption{$Q'=\widetilde{E}_{6}\subset Q(G,\mathbb{C}^3)$}
\end{figure}
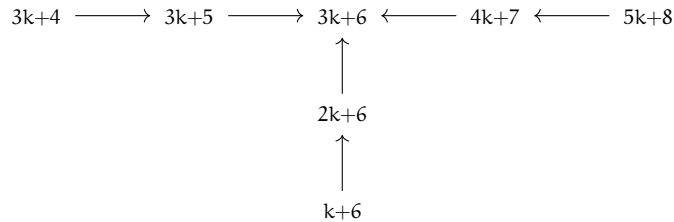
\item Assume that $k>9$ and let $\mathfrak{g}$ be the Lie algebra of type $E_7$ with  $\mathfrak{n}_-\subset \widehat{\mathfrak{g}}$ a nilpotent subalgebra and $Q'\subset Q(G,\mathbb{C}^3)$  the subquiver supported on vertices $Q'_0=\{k+6,2k+3,2k+4,2k+5,2k+6,3k+7,4k+8, 5k+9\}$. There is an isomorphism of algebras $$U(\mathfrak{n}_-)\overset{\Theta}{\rightarrow}\mathcal{H}\langle \{\Psi(\chi_i^!)\}_{i\in Q'_0}\rangle.$$

\vspace{-0.1in}	
\begin{figure}[htbp!]
	\[\begin{tikzcd}
		\mathsmaller{{2k+3}} & \mathsmaller{{2k+4}} & \mathsmaller{{2k+5}} & \mathsmaller{{2k+6}} & \mathsmaller{{3k+7}} & \mathsmaller{{4k+8}} & \mathsmaller{{5k+9}} \\
		&&& \mathsmaller{{k+6}}
		\arrow[from=1-2, to=1-3]
		\arrow[from=2-4, to=1-4]
		\arrow[from=1-3, to=1-4]
		\arrow[from=1-5, to=1-4]
		\arrow[from=1-6, to=1-5]
		\arrow[from=1-7, to=1-6]
		\arrow[from=1-1, to=1-2]
	\end{tikzcd}\]
	\caption{$Q'=\widetilde{E}_{7}\subset Q(G,\mathbb{C}^3)$}
\end{figure}
\item Assume that $k>10$ and let $\mathfrak{g}$ be the Lie algebra of type $E_8$ with  $\mathfrak{n}_-\subset \widehat{\mathfrak{g}}$ a nilpotent subalgebra and $Q'\subset Q(G,\mathbb{C}^3)$ the subquiver supported on vertices $Q'_0=\{k+5,2k+3, 2k+4, 2k+5,3k+6, 4k+7, 5k+8, 6k+9, 7k+10\}$. There is an isomorphism of algebras $$U(\mathfrak{n}_-)\overset{\Theta}{\rightarrow}\mathcal{H}\langle \{\Psi(\chi_i^!)\}_{i\in Q'_0}\rangle.$$

\vspace{-0.1in}	
\begin{figure}[htbp!]
\[\begin{tikzcd}
	\mathsmaller{{2k+3}} & \mathsmaller{{2k+4}} & \mathsmaller{{2k+5}} & \mathsmaller{{3k+6}} & \mathsmaller{{4k+7}} & \mathsmaller{{5k+8}} & \mathsmaller{{6k+9}} & \mathsmaller{{7k+10}} \\
	&& \mathsmaller{{k+5}}
	\arrow[from=1-1, to=1-2]
	\arrow[from=2-3, to=1-3]
	\arrow[from=1-2, to=1-3]
	\arrow[from=1-4, to=1-3]
	\arrow[from=1-5, to=1-4]
	\arrow[from=1-6, to=1-5]
	\arrow[from=1-7, to=1-6]
	\arrow[from=1-8, to=1-7]
\end{tikzcd}\]
	\caption{$Q'=\widetilde{E}_{8}\subset Q(G,\mathbb{C}^3)$}
\end{figure}
	\end{enumerate} 
\label{MainThm}		
\end{thm}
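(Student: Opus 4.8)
The plan is to reduce all five isomorphisms to a single uniform mechanism and then verify, case by case, the three hypotheses $(1)$–$(3)$ from the introduction that trigger it. For a subquiver $Q'\subset Q(G,\mathbb{C}^3)$ the goal is to produce the composite
$$U(\mathfrak{n}_-)\xrightarrow{\ \sim\ }\mathcal{H}\langle\{[S_i]\}_{i\in Q'_0}\rangle\;=\;\mathcal{H}\langle\{\chi_i^!\}_{i\in Q'_0}\rangle\xrightarrow[\ \Psi\ ]{\ \sim\ }\mathcal{H}\langle\{\Psi(\chi_i^!)\}_{i\in Q'_0}\rangle,$$
where the first isomorphism identifies $U(\mathfrak{n}_-)$ with the composition subalgebra generated by the simples $[S_i]$ of $\mbox{Rep}(Q')$ (Example $4.25$ in \cite{J}), valid because $Q'$ will be an affine Dynkin quiver without oriented cycles; the middle equality uses the equivalence $\mbox{Rep}(Q(G,\mathbb{C}^3),\mathcal{R})\simeq Coh_G(\mathbb{C}^3)$ of Proposition \ref{GCohandQuiver} together with condition $(1)$, under which the simple at a vertex of $Q'$ corresponds to $\chi_i^!$ and no relation of $\mathcal{I}$ survives on $Q'$; and the last isomorphism is Corollary \ref{HallAlgIso} applied to $\Psi$, whose hypothesis is precisely that the $\Psi(\chi_i^!)$ be pure and concentrated in one common degree, i.e. condition $(3)$. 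Thus the theorem follows once $(1)$–$(3)$ are checked for each of the five prescribed vertex sets $Q'_0$.

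For the shape of $Q'$ and for condition $(1)$, I would first record that in $Q(G,\mathbb{C}^3)$ with $G=\tfrac1r(1,k,s)$ two vertices are joined by an arrow exactly when their indices differ by $1$, by $k$, or by $s\equiv-(k+1)\pmod r$, the three arrows out of each vertex being multiplication by $x,y,z$. Reading the listed sets through this rule makes the affine type transparent: e.g. in type $\widetilde D_{n+1}$ the vertex $\alpha$ is joined to $\alpha+1$ (difference $1$), to $\alpha-k-1$ (difference $k+1$) and to $\alpha+k$ (difference $k$); the spine $\alpha,\alpha+k,\dots,\alpha+k(n-3)$ consists of successive $y$-arrows; and the far end $\alpha+k(n-3)$ acquires the fork vertices $\alpha+k(n-3)-1$ and $\alpha+k(n-3)+k+1$, giving the two four-valent forks and the correct total $n+2$ of nodes. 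The $\widetilde E_6,\widetilde E_7,\widetilde E_8$ sets are verified the same way, the hypotheses $k>8,9,10$ guaranteeing that the three $E$-arms realize as vertex-disjoint chains. Condition $(1)$ then reduces to the observation that none of these graphs contains a commutative square, i.e. a configuration $v,\,v-i,\,v-j,\,v-i-j$ with distinct $i,j\in\{1,k,k+1\}$ and all four indices in $Q'_0$; since the generators of $\mathcal{I}$ are exactly the commutators of two coordinate multiplications, the absence of such squares gives $\mathcal{I}\cap\mathcal{P}_{Q'}=0$, and the McKay orientation makes each $Q'$ acyclic (for the trees $D,E$ automatically, and for $\widetilde A_k$ because the cycle orients as a directed path plus a chord).

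Condition $(3)$ is where the arithmetic of Reid's recipe does the real work, and I regard it as the main obstacle. By Theorem \ref{CCLthm} each $\Psi(\chi^!)$ is automatically pure, landing in degree $0$ when $\chi$ marks a single divisor or curve and in degree $-1$ when $\chi$ marks a chain of divisors or three chains meeting at a divisor; the goal is to show that every character indexing a vertex of $Q'$ is of the latter, chain type, so that all $\Psi(\chi_i^!)$ sit in the single common degree $-1$. The type $\widetilde A_k$ case is immediate, since the table in part $(2)$ of Theorem \ref{CohMin1} already exhibits all $k+1$ images in degree $-1$; the genuine work is in types $\widetilde D$ and $\widetilde E$. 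There I would argue from Theorem \ref{CohMin1}: part $(1)$ restricts which characters appear on edges, while the three ``decrease by $k$ / decrease by $k+1$'' propagation rules at the end of its proof let one track, for each selected index, the complete set of edges it marks and confirm that they form a connected chain (respectively three chains meeting at the branch node). The numerical constraints — $s\gg0$ throughout, the congruences $\alpha\equiv q\pmod k$, $\alpha\equiv-2\pmod{k+1}$ in type $\widetilde D$, and $k>8,9,10$ in types $\widetilde E$ — are exactly what forces these to be interior chains rather than boundary single divisors or curves, and what makes the branch character mark three chains meeting. The subtle point, and the crux of the verification, is to confirm uniformly in $n$ (type $\widetilde D$) and for each exceptional type that no selected character degenerates to a single-divisor or single-curve marking, which would push its image into $H^0$ and break the common-degree hypothesis. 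Once this is established, conditions $(1)$–$(3)$ hold and the composite $\Theta$ above yields the asserted isomorphism in each case.
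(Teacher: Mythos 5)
Your skeleton coincides with the paper's: identify $U(\mathfrak{n}_-)$ with the composition subalgebra of $\mathcal{H}(\mathrm{Rep}(Q'))$ via Example $4.25$ of \cite{J} (legitimate because each $Q'$ is oriented without directed cycles), transport through the equivalence $\mathrm{Rep}(Q(G,\mathbb{C}^3),\mathcal{R})\simeq Coh_G(\mathbb{C}^3)$ of Proposition \ref{GCohandQuiver} and the Bridgeland--King--Reid equivalence of Theorem \ref{BKRmain}, and finish with Corollary \ref{HallAlgIso}. Your description of the McKay-quiver arrows (index differences $1$, $k$, $-(k+1)$ modulo $r$), your identification of the five affine shapes, and your treatment of $\mathcal{I}\cap\mathcal{P}_{Q'}=0$ and acyclicity are all consistent with what the paper does (the paper only writes out case (1) and declares (2)--(5) analogous).

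The genuine gap is in your plan for what you yourself call the crux: the common-degree verification in cases (2)--(5). You propose to show that every selected character marks a chain of divisors (or three chains), so that all $\Psi(\chi_i^!)$ sit in degree $-1$, and you assert that the congruences on $\alpha$ and the bounds $k>8,9,10$ ``force interior chains''. This is backwards, and the verification as you describe it would stall. Those constraints force each selected index to violate \emph{all three} conditions of Theorem \ref{CohMin1}(1): in type $\widetilde D$, $\alpha\equiv q\pmod{k}$ with $2\le q\le k-2$ and $\alpha\equiv -2\pmod{k+1}$, so $\alpha$ is divisible by neither $k$ nor $k+1$ and exceeds $k+1$, and similarly for the spine $\alpha+jk\equiv -2-j\pmod{k+1}$ with $j\le n-3\le k-2$; in types $\widetilde E_{6,7,8}$ the listed indices have residues at most $8,9,10$ modulo $k$ and $k+1$, which is exactly what the hypotheses $k>8,9,10$ keep nonzero. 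By the contrapositive of Theorem \ref{CohMin1}(1) these characters mark \emph{no} edge of the triangulation; since every nontrivial character appears in Reid's recipe, each of them marks a single divisor, and Theorem \ref{CCLthm} then places $\Psi(\chi_i^!)$ in degree $0$, not $-1$. (This is precisely the count behind Corollary \ref{LimRatio}: the characters avoiding the conditions of Theorem \ref{CohMin1}(1) are the ones contributing to $\mathfrak{H}_0$.) So the common degree is $-1$ only in case (1), where the table of Theorem \ref{CohMin1}(2) applies verbatim; in cases (2)--(5) it is $0$. Corollary \ref{HallAlgIso} applies either way, so the theorem itself is unaffected, but your proposed argument hunts for chains of marked edges that do not exist, whereas the correct (and much easier) check is the residue computation above.
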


\begin{proof}	
	We present an argument for $(1)$ with the verification of $(2)-(5)$ being completely analogous. 
	
	The McKay quiver $Q(G,\mathbb{C}^3)$ contains an $\widetilde{A}_{k}$ subquiver $Q'$ supported on the vertices $1,2,\hdots, k+1$. Notice that $Q'$ has no oriented cycles, hence, $U(\mathfrak{n}_-)$ is isomorphic to the composition subalgebra of $\mathcal{H}(Rep(Q'))$ (subalgebra generated by characteristic functions of simple representations), see Example $4.25$ in \cite{J}.  Then subsequent application of $(2)$, Theorem \ref{BKRmain} and Proposition \ref{GCohandQuiver} together with Theorem \ref{CohMin1} and Corollary \ref{HallAlgIso}  gives rise to the proposed isomorphism, see diagram on Figure \ref{diagram}  with embeddings, isomorphisms and correspondences below for a schematic summary.
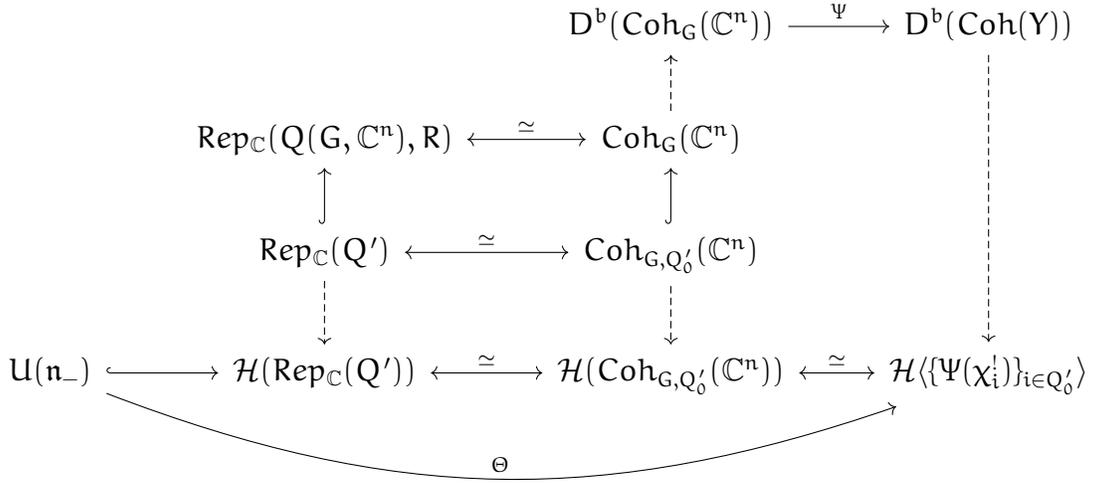
\begin{figure}[htbp!]	
\[\begin{tikzcd}
	&& {D^b(Coh_G(\mathbb{C}^n))} & {D^b(Coh(Y))} \\
	{} & {Rep_{\mathbb{C}}(Q(G,\mathbb{C}^n), R)} & {Coh_G(\mathbb{C}^n)} \\
	& {Rep_{\mathbb{C}}(Q')} & {Coh_{G,Q'_0}(\mathbb{C}^n)} \\
	{U(\mathfrak{n}_-)} & {\mathcal{H}(Rep_{\mathbb{C}}(Q'))} & {\mathcal{H}(Coh_{G,Q'_0}(\mathbb{C}^n))} & {\mathcal{H}\langle \{\Psi(\chi_i^!)\}_{i\in Q'_0}\rangle}
	\arrow["\simeq", <->, from=2-2, to=2-3]
	\arrow[hook, from=3-3, to=2-3]
	\arrow[hook, from=3-2, to=2-2]
	\arrow["\simeq", <->, from=3-2, to=3-3]
	\arrow[hook, from=4-1, to=4-2]
	\arrow["\simeq", <->, from=4-2, to=4-3]
	\arrow["\simeq", <->, from=4-3, to=4-4]
	\arrow[dashed, from=3-2, to=4-2]
	\arrow[dashed, from=3-3, to=4-3]
	\arrow[dashed, from=2-3, to=1-3]
	\arrow["\Psi", from=1-3, to=1-4]
	\arrow[dashed, from=1-4, to=4-4]
	\arrow["\Theta",bend right=20,  from=4-1, to=4-4]
\end{tikzcd}\]
\caption{Diagrammatic overview of the construction}
\label{diagram}
\end{figure}
\end{proof}
\begin{rmk}
Let $G=\mathbb{Z}/r\mathbb{Z}\hookrightarrow SL_3(\mathbb{C})$ be a finite abelian subgroup with $\nu_1=\dfrac{1}{r}(1,k,s)$ and $1+k+s =r$. The McKay quiver $Q(G,\mathbb{C}^3)$ can not contain a $\widetilde{D}_4$ subquiver. This is due to the fact that 	$\widetilde{D}_4$ has a vertex of valency $4$, which implies the existence of an oriented $3$-cycle, supported on these vertex and $2$ of the vertices connected to it. Indeed, let the vertex of valency $4$ correspond to the character (irreducible representation) $\chi_i$. Then, inevitably, there are two vertices indexed by $\chi_{i-a}$ and   $\chi_{i+b}$ with $a\neq b \in \{1,k,s\}$  that are included in the subgraph as well. Therefore, the subgraph contains an oriented $3$-cycle supported on the vertices enumerated by $\chi_i,\chi_{i-a}$ and   $\chi_{i+b}$.
\end{rmk}
 \begin{conj}
 	Let $ \Bbbk $ be a finite field and $\mbox{char}(\Bbbk)$ coprime with the order of $G$. Under the assumptions $(1)-(5)$ of Theorem \ref{MainThm} one has the corresponding isomorphisms 
 	
 	$$U_v(\mathfrak{n}_-)\overset{\Theta_{fin}}{\rightarrow}\mathcal{H}_{fin}\langle \{\Psi(\chi_i^!)\}_{i\in Q'_0}\rangle$$
 	
 	with  $v=\sqrt{\Bbbk}$.
 \end{conj}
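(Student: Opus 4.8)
The plan is to reproduce, over the finite field $\Bbbk$, the chain of identifications that established Theorem \ref{MainThm}, replacing the Euler-characteristic Hall algebra by the counting Hall algebra $\mathcal{H}_{fin}$ and the Kapranov--Vasserot transport (Corollary \ref{HallAlgIso}) by its point-counting refinement. The algebraic end of the diagram in Figure \ref{diagram} survives verbatim: since $\mbox{char}(\Bbbk)$ is coprime to $|G|$, the group algebra $\Bbbk[G]$ is semisimple, the McKay quiver and its relations are defined purely combinatorially, and Proposition \ref{GCohandQuiver} yields an equivalence $Rep_{\Bbbk}(Q(G,\mathbb{A}^3_\Bbbk),\mathcal{R})\simeq Coh_G(\mathbb{A}^3_{\Bbbk})$ under which each $\chi_i^!$ corresponds to the simple representation $S_i$. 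Restricting to the subquiver $Q'$ (which has no oriented cycles and satisfies $\mathcal{I}\cap\mathcal{P}_{Q'}=0$), Theorem \ref{Ringel} of Ringel and Green identifies $U_v(\mathfrak{n}_-)$ with the composition subalgebra of $\mathcal{H}_{fin}(Rep_{\Bbbk}(Q'))$, i.e. the Hall subalgebra generated by the classes $[S_i]=[\chi_i^!]$, with $v=\sqrt{|\Bbbk|}$.

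What remains is to transport this subalgebra across $\Psi$ and to upgrade the embedding to an isomorphism. First I would fix a model of the derived McKay equivalence over $\Bbbk$: the Bridgeland--King--Reid equivalence is a Fourier--Mukai transform along the structure sheaf of the universal $G$-cluster $\mathcal{Z}\subset Y\times\mathbb{A}^3_\Bbbk$, and the coprimality hypothesis is precisely what one expects to guarantee the flatness and fibrewise-regular-representation properties needed for this kernel to define an exact equivalence $\Psi_\Bbbk:D^b_G(Coh(\mathbb{A}^3_\Bbbk))\xrightarrow{\sim}D^b(Coh(Y_\Bbbk))$. Second, because Reid's recipe is a statement about the lattice geometry of the junior simplex and is insensitive to the characteristic, the computation of Theorem \ref{CohMin1} carries over unchanged; in particular the objects $\Psi_\Bbbk(\chi_i^!)$ for $i\in Q'_0$ remain pure and concentrated in a common degree, so they lie in a single shifted copy of the heart $Coh(Y_\Bbbk)$.

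The finite-field refinement of Corollary \ref{HallAlgIso} is then the technical core of the argument. The multiplicative Euler form is automatically preserved, since $\Psi$ induces isomorphisms on $\Hom_{D^b}(A,B[i])$; hence $\langle A,B\rangle=\langle\Psi A,\Psi B\rangle$ and $|\End(A)|=|\End(\Psi A)|$. For the Hall numbers $P^C_{A,B}$ I would upgrade the Kapranov--Vasserot Proposition preceding Corollary \ref{HallAlgIso}: that result produces an isomorphism of the moduli varieties $G^C_{AB}\simeq G^{\Psi C}_{\Psi A\,\Psi B}$, and since the equivalence, the objects and this identification are all defined over $\Bbbk$, it descends to a bijection of $\Bbbk$-rational points $G^C_{AB}(\Bbbk)\simeq G^{\Psi C}_{\Psi A\,\Psi B}(\Bbbk)$. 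Counting these points gives $\mathcal{P}^C_{A,B}=\mathcal{P}^{\Psi C}_{\Psi A\,\Psi B}$, hence equality of the structure constants of $\star_{fin}$. Combined with the previous paragraph this yields the desired algebra isomorphism $\mathcal{H}_{fin}\langle\{\chi_i^!\}\rangle\simeq\mathcal{H}_{fin}\langle\{\Psi(\chi_i^!)\}\rangle$, which composed with Ringel--Green produces $\Theta_{fin}$.

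The main obstacle is the first step: making the Bridgeland--King--Reid equivalence genuinely available over a finite field. The original proof is intersection-theoretic over $\mathbb{C}$, and while the coprimality of $\mbox{char}(\Bbbk)$ with $|G|$ is the natural hypothesis for a positive-characteristic McKay correspondence, verifying that the Fourier--Mukai kernel satisfies Bridgeland's spanning/orthogonality criterion in characteristic $p$ --- and that $Y_\Bbbk=G\text{-}\mbox{Hilb}(\mathbb{A}^3_\Bbbk)$ is still a crepant resolution with the expected toric structure --- is what one cannot take for granted, and is the reason the statement is phrased as a conjecture. A secondary, more routine difficulty is ensuring the descent to $\Bbbk$ of the Kapranov--Vasserot variety isomorphism, so that genuine $\Bbbk$-point counts, and not merely $\overline{\Bbbk}$-dimensions, are matched.
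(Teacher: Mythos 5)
The statement you are trying to prove is a \emph{conjecture} in the paper: the author offers no proof, only a remark listing facts in its support. Your proposal follows exactly the route that remark envisions --- replace Joyce's Example~4.25 by the Ringel--Green embedding (Theorem~\ref{Ringel}), establish a point-counting analogue of Corollary~\ref{HallAlgIso}, and invoke a finite-field version of the Bridgeland--King--Reid equivalence --- so in terms of strategy you and the author agree. You are also right, and commendably honest, that the argument as written is not a proof; the question is whether you have located the unresolved steps correctly.

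Here there is a genuine discrepancy. You treat the existence of $\Psi_\Bbbk$ as the main obstruction, but the paper asserts (citing the comment after Conjecture~$2.24$ in \cite{Rou}) that the BKR equivalence is known in this setting when $\mbox{char}(\Bbbk)$ is coprime to $|G|$; so that step is, at least according to the author, not the bottleneck. Conversely, the step you dismiss as automatic --- ``because Reid's recipe is a statement about the lattice geometry of the junior simplex and is insensitive to the characteristic, the computation of Theorem~\ref{CohMin1} carries over unchanged'' --- conflates two different things. The combinatorial marking of edges by characters is indeed characteristic-independent, but what Theorem~\ref{CohMin1} actually feeds into the Hall-algebra argument is the \emph{derived} Reid's recipe of Cautis--Craw--Logvinenko (Theorem~\ref{CCLthm}): the identification of the cohomology sheaves of $\Psi(\chi^!)$, and in particular the purity and common degree of the objects $\Psi(\chi_i^!)$ for $i \in Q'_0$. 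That theorem is proven over $\mathbb{C}$ by arguments that are not purely lattice-combinatorial, and its validity over a finite field is precisely (a large part of) the open content of the conjecture. So your proposal mislocates the difficulty: the step you flag as hard is the one the paper considers settled, and the step you wave through is one of the reasons the statement remains a conjecture rather than a theorem.
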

\begin{rmk}
We provide some facts in support of the conjecture. The Bridgeland-King-Reid  equivalence ((2) in Theorem \ref{BKRmain}) is known to hold in the above setup (see the comment after Conjecture $2.24$ in \cite{Rou}). Another required modification to the proof of Theorem \ref{MainThm} is to use Theorem \ref{Ringel} in place of the isomorphism from Example $4.25$ in \cite{J}. The analogue of Corollary \ref{HallAlgIso} holds true as well.	
\end{rmk}

\bibliographystyle{alpha}

\end{document}